\DeclareMathOperator{\const}{const}
\DeclareMathOperator{\dd}{d}
\begin{document}


\title[Quaternionic K\"ahler manifolds with Hermitian and Norden metrics]
{Quaternionic K\"ahler manifolds \\ with Hermitian and Norden
metrics}

\author{Mancho Manev}


\frenchspacing

\newcommand{\ie}{i.~e. }
\newcommand{\X}{\mathfrak{X}}
\newcommand{\R}{\mathbb{R}}
\newcommand{\W}{\mathcal{W}}
\newcommand{\K}{\mathcal{K}}
\newcommand{\s}{\mathfrak{S}}
\newcommand{\n}{\nabla}
\newcommand{\al}{\alpha}
\newcommand{\bt}{\beta}
\newcommand{\gm}{\gamma}
\newcommand{\om}{\omega}
\newcommand{\lm}{\lambda}
\newcommand{\ta}{\theta}
\newcommand{\ea}{\varepsilon_\alpha}
\newcommand{\eb}{\varepsilon_\bt}
\newcommand{\eg}{\varepsilon_\gamma}
\newcommand{\sa}{\sum_{\al=1}^3}
\newcommand{\sbt}{\sum_{\bt=1}^3}
\newcommand{\ee}{\end{equation}}
\newcommand{\be}[1]{\begin{equation}\label{#1}}
\def\bea{\begin{eqnarray}}
\def\eea{\end{eqnarray}}
\newcommand{\norm}[1]{\left\Vert#1\right\Vert ^2}
\newcommand{\nJ}[1]{\norm{\nabla J_{#1}}}

\newcommand{\thmref}[1]{The\-o\-rem~\ref{#1}}
\newcommand{\propref}[1]{Pro\-po\-si\-ti\-on~\ref{#1}}
\newcommand{\secref}[1]{\S\ref{#1}}
\newcommand{\lemref}[1]{Lem\-ma~\ref{#1}}
\newcommand{\dfnref}[1]{De\-fi\-ni\-ti\-on~\ref{#1}}
\newcommand{\corref}[1]{Corollary~\ref{#1}}

\renewcommand{\thefootnote}{\fnsymbol{footnote}}

\numberwithin{equation}{section}
\newtheorem{thm}{Theorem}[section]
\newtheorem{lem}[thm]{Lemma}
\newtheorem{prop}[thm]{Proposition}
\newtheorem{cor}[thm]{Corollary}
\newtheorem{probl}[thm]{Problem}
\newtheorem{concl}[thm]{Conclusion}
\newtheorem*{ack}{Acknowledgements}

\newtheorem{defn}{Definition}[section]
\newtheorem{rem}{Remark}[section]
\newtheorem{exa}{Example}

\hyphenation{Her-mi-ti-an ma-ni-fold ah-ler-ian}

\subjclass[2000]{53C26, 53C15, 53C50, 53C55.}

\keywords{almost hypercomplex manifold, quaternionic K\"ahler
manifold, Norden metric, indefinite metric.}


\begin{abstract}
Almost hypercomplex manifolds with Hermitian and Norden metrics
and more specially the corresponding quaternionic K\"ahler
manifolds are considered. Some necessary and sufficient conditions
the investigated manifolds be isotropic hyper-K\"ahlerian and flat
are found. It is proved that the quaternionic K\"ahler manifolds
with the considered metric structure are Einstein for dimension at
least 8. The class of the non-hyper-K\"ahler quaternionic K\"ahler
manifold of the considered type is determined.
\end{abstract}

\maketitle

\setcounter{tocdepth}{2} \tableofcontents

\section*{Introduction}

In this work\footnote{This work was partially supported by the
Scientific Researches Fund at the University of Plovdiv.} we
continue the investigations on a manifold $M$ with an almost
hypercomplex structure $H$. We equip this almost hypercomplex
manifold $(M,H)$ with a metric structure $G$, generated by a
pseudo-Riemannian metric $g$ of neutral signature (\cite{GrMa},
\cite{GrMaDi}).

It is known, if $g$ is a Hermitian metric on $(M,H)$, the derived
metric structure $G$ is the known hyper-Hermitian structure. It
consists of the given Hermitian metric $g$ with respect to the
three almost complex structures of $H$ and the three K\"ahler
forms associated with $g$ by $H$ \cite{AlMa}.

In  our case the considered metric structure $G$ has a different
type of compatibility with $H$. The structure $G$ is generated by
a neutral metric $g$ such that the one (resp., the other two) of
the almost complex structures of $H$ acts as an isometry (resp.,
act as anti-isometries) with respect to $g$ in each tangent fibre.
Let
the almost complex structures of $H$ act as isometries or
anti-isometries with respect to the metric. Then the existence of
an anti-isometry generates exactly the existence of one more
anti-isometry and an isometry. %
Thus, $G$ contains the metric $g$ and three (0,2)-tensors
associated by $H$ -- a K\"ahler form and two metrics of the same
type.
The existence of such bilinear forms on an almost hypercomplex
manifold is proved in \cite{GrMa}.
The neutral metric $g$ is Hermitian with respect to the one almost
complex structure of $H$ and $g$ is an anti-Hermitian (\ie a
Norden) metric regarding the other two almost complex structures
of $H$. For this reason we call the derived almost hypercomplex
manifold $(M,H,G)$ an \emph{almost hypercomplex manifold with
NH-metric structure} or an \emph{almost hypercomplex NH-manifold}.

If the three almost complex structures of $H$ are parallel with
respect to the Levi-Civita connection $\nabla$ of $g$ then such
hypercomplex NH-manifolds of K\"ahler type we call
\emph{hyper-K\"ahler NH-manifolds}, which are flat according to
\cite{GrMaDi}.

In the first section we recall some facts about the almost
hypercomplex NH-manifolds known from \cite{AlMa}, \cite{GrMa},
\cite{GrMaDi}, \cite{Ma09}.

In the second section we introduce the corresponding quaternionic
K\"ahler manifold of an almost hypercomplex manifold with
NH-metric structure. We establish that the quaternionic K\"ahler
NH-manifolds are Einstein for dimension $4n\geq 8$. For
comparison, it is known that the quaternionic K\"ahler manifolds
with hyper-Hermitian metric structure are Einstein for all
dimensions.

In the third section we consider the location of the quaternionic
K\"ahler NH-manifolds in the classification of the corresponding
almost hypercomplex manifolds with respect to the covariant
derivatives of the almost complex structures. We get only one
class (except the general one) of the considered classification
where these manifolds are non-hyper-K\"ahlerian and consequently
non-flat always.

In the fourth section we characterize the obtained in the previous
chapter non-hyper-K\"ahler quaternionic K\"ahler NH-manifolds.

The basic problem of this work is the existence and the geometric
characteristics of the quaternionic K\"ahler NH-manifolds. The
main results of this paper is that every quaternionic K\"ahler
NH-manifold is Einstein for dimension at least 8 and it is not
flat hyper-K\"ahlerian only when belongs to the general class
$\W_1\oplus\W_2\oplus\W_3$ or the class $\W_1\oplus\W_3$, where
the manifold is Ricci-symmetric.

\section{Almost hypercomplex manifolds with NH-metric structure}

Let $(M,H)$ be an almost hypercomplex manifold, \ie $M$ is a
$4n$-dimension\-al differentiable manifold and $H=(J_1,J_2,J_3)$
is a triple of almost complex structures on $M$ with the
properties:
\be{J123} J_\al=J_\bt\circ J_\gm=-J_\gm\circ J_\bt, \qquad
J_\al^2=-I\ee for all cyclic permutations $(\al, \bt, \gm)$ of
$(1,2,3)$ and $I$ denotes the identity \cite{AlMa}.

Let $g$ be a neutral metric on $(M,H)$ with the properties
\be{gJJ} %
g(x,y)=\ea g(J_\al x,J_\al y), \ee %
where
\[ \ea=
\begin{cases}
\begin{array}{ll}
1, \quad & \al=1;\\[4pt]
-1, \quad & \al=2;3.
\end{array}
\end{cases}
\]
Moreover, the associated (K\"ahler) 2-form $g_1$ and the
associated neutral metrics $g_2$ and $g_3$ are determined by
\be{gJ} g_\al(x,y)=g(J_\al x,y)=-\ea g(x,J_\al y). \ee

A structure $(H,G)=(J_1,J_2,J_3,g,g_1,g_2,g_3)$ is introduced and
investigated in \cite{GrMa}, \cite{GrMaDi}, \cite{ManSek},
\cite{Ma05} and \cite{Ma09}. Now we call it an \emph{almost
hypercomplex NH-metric structure} on $M$. Then, a manifold with
such structure $(M,H,G)$ we call an \emph{almost hypercomplex
manifold with NH-metric structure} or an \emph{almost hypercomplex
NH-manifold}.

The structural tensors of a such manifold are the following three
$(0,3)$-tensors
\begin{equation}\label{F}
F_\al (x,y,z)=g\bigl( \left( \n_x J_\al
\right)y,z\bigr)=\bigl(\n_x g_\al\bigr) \left( y,z \right),
\end{equation}
where $\n$ is the Levi-Civita connection generated by $g$.
The corresponding Lie 1-forms $\ta_\al$ are defined by
\begin{equation}\label{theta-al}
\ta_\al(\cdot)=g^{ij}F_\al(e_i,e_j,\cdot)
\end{equation}%
for an arbitrary basis $\{e_1,e_2,\dots, e_{4n}\}$ of $T_pM$,
$p\in M$.

In \cite{GrMaDi} we study the so-called \emph{hyper-K\"ahler
manifolds with NH-metric structure} (or pseudo-hyper-K\"ahler
manifolds), \ie the almost hypercomplex NH-manifold in the class
$\K$, where $\n J_\al=0$ for all $\al=1,2,3$. A sufficient
condition $(M,H,G)$ be in $\K$ is this manifold be of
K\"ahler-type with respect to two of the three complex structures
of $H$ \cite{GrMa}.
%

As $g$ is an indefinite metric, there exist isotropic vectors $x$
on $M$, \ie{} \(g(x,x)=0\), \(x\neq 0\). 
In \cite{GrMa} we define the invariant square norm
\begin{equation}\label{nJ}
\nJ{\al}= g^{ij}g^{kl}g\bigl( \left( \nabla_i J_\al \right) e_k,
\left( \nabla_j J_\al \right) e_l \bigr),
\end{equation}
where $\{e_1,e_2,\dots, e_{4n}\}$ of $T_pM$, $p\in M$. We say that
an almost hypercomplex NH-manifold is an \emph{isotropic
hyper-K\"ahler NH-manifold} if $\nJ{\al}=0$ for every $J_\al$ of
$H$. In \cite{GrMa} such a manifold is called an isotropic
hyper-K\"ahler manifold. Clearly, if $(M,H,G)$ is a hyper-K\"ahler
NH-manifold, then it is an isotropic hyper-K\"ahler NH-manifold.
The inverse statement does not hold.

Let us consider the Nijenhuis tensors $N_\al$ for $J_\al$ given by
$N_\al(x,y) = \left[J_\al x,J_\al y \right]
    -J_\al\left[J_\al x,y \right]
    -J_\al\left[x,J_\al y \right]
    -\left[x,y \right]$ for $x$, $y$ $\in T_pM$.
It is well known that the almost hypercomplex structure
$H=(J_\al)$ is a {\em hypercomplex structure\/} if $N_\al$
vanishes for each $\al=1,2,3$. Moreover, it is known that one
almost hypercomplex structure $H$ is hypercomplex if and only if
two of the structures $J_\al$ $(\al=1,2,3)$ are integrable. This
means that two of the tensors $N_\al$ vanish \cite{AlMa}.

Let us note that according to \eqref{gJJ} the manifold $(M,J_1,g)$
is almost Hermitian and the manifolds $(M,J_\al,g)$, $\al=2,3$,
are almost complex manifolds with NH-metric structure (or
$B$-metric) \cite{GaBo}. The basic classes of the mentioned two
types of manifolds are given in \cite{GrHe} and \cite{GaBo},
respectively, and  they are determined for dimension $4n$ as
follows:
\begin{equation}\label{cl-H}
\begin{split}
&\W_1(J_1):\; F_1(x,y,z)=-F_1(y,x,z); \\
&\W_2(J_1):\; \mathop{\s}_{x,y,z}\bigl\{F_1(x,y,z)\bigr\}=0; \\
&\W_3(J_1):\; F_1(x,y,z)=F_1(J_1x,J_1y,z),\quad \ta_1=0; \\
&\W_4(J_1):\; F_1(x,y,z)=\frac{1}{4n-2}
                \left\{g(x,y)\ta_1(z)-g(x,z)\ta_1(y)\right. \\
&\phantom{\W_4(J_1):\; F_1(x,y,z)=\frac{1}{4n-2}
                \left\{\right.}
                \left.-g(x,J_1y)\ta_1(J_1z)+g(x,J_1z)\ta_1(J_1y)
                \right\}
\end{split}
\end{equation}
and for $\al=2$ or $3$
\begin{equation}\label{cl-N}
\begin{split}
&\W_1(J_\al):\; F_\al(x,y,z)=\frac{1}{4n}\bigl\{
g(x,y)\ta_\al(z)+g(x,z)\ta_\al(y)\bigr.\\
&\phantom{\W_1(J_\al):\; F_\al(x,y,z)=\frac{1}{4n}\bigl\{\bigr.} %
\bigl.+g(x,J_\al y)\ta_\al(J_\al z)
    +g(x,J_\al z)\ta_\al(J_\al y)\bigr\};\\
&\W_2(J_\al):\; \mathop{\s}_{x,y,z}
\bigl\{F_\al(x,y,J_\al z)\bigr\}=0,\quad \ta_\al=0;\\
&\W_3(J_\al):\; \mathop{\s}_{x,y,z} \bigl\{F_\al(x,y,z)\bigr\}=0,
\end{split}
\end{equation}
where $\s $ is the cyclic sum by three arguments $x$, $y$, $z$.

The special class $\W_0(J_\al):$ $F_\al=0$ $(\al =1,2,3)$ of the
K\"ahler-type manifolds belongs to any other class within the
corresponding classification.


Let the curvature tensor $R$ of the Levi-Civita connection
$\nabla$, generated by $g$, be defined, as usually, by
$R(x,y)z=\left[\nabla_x,\nabla_y\right] z -
\nabla_{\left[x,y\right]} z$. The corresponding $(0,4)$-tensor is
determined by $R(x,y,z,w)=g\left(R(x,y)z,w\right)$. Obviously, $R$
is a \emph{K\"ahler-type tensor} on an arbitrary hyper-K\"ahler
NH-manifold, \ie
\be{R-kel}%
\begin{array}{l}
R(x,y,z,w)=\ea R(x,y,J_\al z,J_\al w)=\ea R(J_\al x,J_\al y,z,w).
\end{array}
\ee

A basic property of the hyper-K\"ahler NH-manifolds is given in
\cite{GrMaDi} by the following
\begin{thm}[\cite{GrMaDi}]\label{thm-R=0}
Each hyper-K\"ahler NH-manifold is a flat pseudo-Rie\-mann\-ian
ma\-ni\-fold of signature $(2n,2n)$. $\hfill\Box$
\end{thm}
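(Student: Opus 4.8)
The statement splits into two essentially independent claims, and I would treat them separately. The plan for the signature is short: it uses only \eqref{gJJ} for $\al=2$ and not the parallelism. Since $J_2$ is a $g$-anti-isometry with $J_2^2=-I$, the equality $g(J_2x,J_2x)=-g(x,x)$ shows that the linear isomorphism $J_2$ carries every $g$-positive-definite subspace onto a $g$-negative-definite subspace of the same dimension, and conversely. Hence a maximal positive-definite and a maximal negative-definite subspace have equal dimension, which in a $4n$-dimensional space must be $2n$; therefore $g$ has signature $(2n,2n)$. (This argument in fact holds for every almost hypercomplex NH-manifold.)

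For flatness I would first record that $\n J_\al=0$ forces each curvature operator $R(x,y)$ to commute with every $J_\al$ and to be skew-adjoint with respect to $g$; equivalently, the $(0,4)$-tensor obeys \eqref{R-kel} for all $\al=1,2,3$, with the Hermitian sign $+1$ for $J_1$ and the Norden signs $-1$ for $J_2,J_3$. Thus every value $R(x,y)$ lies in the centralizer $\mathfrak{h}$ of $H$ inside $\mathfrak{so}(g)$. Because $J_1,J_2,J_3$ satisfy \eqref{J123}, they make each tangent space a quaternionic module, so $\mathfrak{h}$ consists precisely of the quaternion-linear, $g$-skew endomorphisms; the sign pattern $(+,-,-)$ keeps this algebra small (for $4n=4$ it is one-dimensional).

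The hard part — and what I expect to be the main obstacle — is that the symmetries \eqref{R-kel} are by themselves only a holonomy restriction and are tautologically self-consistent: pushing them through the relation $J_1=J_2J_3$ and the anticommutativity in \eqref{J123} never produces a sign contradiction, so flatness is not a formal consequence of \eqref{R-kel} alone. The decisive input must therefore be the first Bianchi identity $\mathop{\s}_{x,y,z}R(x,y)z=0$ fed against the constraint $R(x,y)\in\mathfrak{h}$. My plan is to exploit the quaternion-linearity of the values $R(x,y)$ to rewrite the Bianchi sum as a single quaternion-linear expression and then evaluate it on linearly independent vectors in dimension $4n\ge 4$, which forces the coefficients, and hence $R$, to vanish; the four-dimensional model (where $\mathfrak h$ is one-dimensional and Bianchi instantly kills the associated $2$-form) is the prototype. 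The delicate point to check for general $n$ is exactly this Bianchi collapse, namely that the intersection of the Kähler-type condition for $J_1$ with the Norden-Kähler conditions for $J_2,J_3$, inside the space of algebraic curvature tensors, leaves only the zero tensor.
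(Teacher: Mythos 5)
Your signature argument is correct (in fact the paper takes $g$ to be neutral by definition, so that half of the statement needs no proof), and your algebraic framing of the flatness part is also sound: $\n J_\al=0$ gives $[R(x,y),J_\al]=0$ and hence \eqref{R-kel}, and you are right that \eqref{R-kel} is not self-contradictory as pure algebra. Indeed, in dimension $4$ the tensors with the pair symmetries and \eqref{R-kel} but \emph{without} the first Bianchi identity form a one-dimensional space spanned by $\varphi\otimes\varphi$, where $\varphi(x,y)=g(Ax,y)$ for the generator $A$ of the ($1$-dimensional) algebra of $g$-skew endomorphisms commuting with $H$; the Bianchi identity kills it precisely because $\varphi\wedge\varphi\neq0$. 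So your diagnosis that the first Bianchi identity is the decisive input is correct, and your $4$-dimensional case is essentially complete. (For comparison: the paper itself gives no proof of this theorem --- it is quoted from \cite{GrMaDi} --- and in the paper's own framework flatness is the one-line consequence of \eqref{R-kel} and \thmref{th-0}, quoted from \cite{Ma09}.)

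For $n\geq 2$, however, your text is a plan rather than a proof, and the step you defer is not a detail: the ``Bianchi collapse'' is exactly the assertion that no nonzero algebraic curvature tensor satisfies \eqref{R-kel}, i.e.\ it is the entire content of \thmref{th-0}. The commuting algebra now has dimension $n(2n-1)>1$, and ``rewriting the Bianchi sum as a quaternion-linear expression and evaluating on independent vectors'' is not yet an argument --- nothing in the proposal rules out a nonzero element of $\operatorname{Sym}^2$ of that algebra lying in the kernel of the Bianchi map. A concrete way to close the gap, in every dimension at once: for $\al=2,3$ the Norden-type identity $R(x,y,J_\al z,J_\al w)=-R(x,y,z,w)$ gives the in-pair exchange rule $R(x,y,J_\al z,w)=R(x,y,z,J_\al w)$ with a plus sign; feeding this into the first Bianchi identity shows that the defect $D(x,y,z,w)=R(J_\al x,y,z,w)-R(x,y,J_\al z,w)$ is totally alternating, while it is manifestly anti-invariant under exchange of the two pairs, hence $D=0$, i.e.\ $R$ is \emph{pure}: $J_\al$ ($\al=2,3$) may be moved freely between any two arguments. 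Then, writing $J_1=J_2J_3$ and moving $J_2$ and $J_3$ one at a time, one gets $R(x,y,J_1z,w)=R(J_2x,J_3y,z,w)=R(x,y,z,J_1w)$, whereas the Hermitian identity \eqref{R-kel} for $\al=1$ gives $R(x,y,J_1z,w)=-R(x,y,z,J_1w)$; hence $R\equiv0$. Your dimension-$4$ prototype is right, but it does not generalize by itself, and the general case is precisely what was to be proved.
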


In \cite{Ma09} it is proved the following more general property.
\begin{thm}[\cite{Ma09}]\label{th-0}
Every K\"ahler-type tensor on an almost hypercomplex
NH-ma\-ni\-fold is ze\-ro. \hfill $\Box$
\end{thm}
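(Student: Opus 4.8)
The plan is to prove that every K\"ahler-type tensor $R$ on an almost hypercomplex NH-manifold vanishes identically, using only the algebraic symmetries \eqref{R-kel} together with the definition of $H$ in \eqref{J123} and the compatibility signs $\ea$ from \eqref{gJJ}. The key observation is that the three relations in \eqref{R-kel} come with \emph{different signs} for the three structures --- $\varepsilon_1=1$ for $J_1$ but $\varepsilon_2=\varepsilon_3=-1$ for $J_2,J_3$ --- and that the anticommutation $J_\al J_\bt=-J_\bt J_\al$ lets me interchange two structures inside $R$. The strategy is to apply the $J_\al$-invariance repeatedly with different indices and exploit the resulting sign mismatch to force $R$ to equal its own negative.

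First I would take the K\"ahler-type identity $R(x,y,z,w)=\ea R(x,y,J_\al z,J_\al w)$ and chain it: applying it successively for $\al=1$ then $\al=2$ (or any two distinct indices $\al\neq\bt$), I get
\be{plan-chain}
R(x,y,z,w)=\ea\eb R\bigl(x,y,J_\al J_\bt z,J_\al J_\bt w\bigr).
\ee
Then I would apply the same identity in the reversed order $\bt$ first, then $\al$, obtaining $R(x,y,z,w)=\eb\ea R(x,y,J_\bt J_\al z,J_\bt J_\al w)$. Since $\ea\eb=\eb\ea$, the two right-hand sides have equal scalar prefactors, so subtracting yields
\be{plan-anticomm}
R\bigl(x,y,(J_\al J_\bt+J_\bt J_\al)z,\,\cdot\bigr)\text{-type combinations vanish automatically},
\ee
which is consistent but not yet decisive. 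The decisive step is instead to mix the two forms of \eqref{R-kel}: combine the invariance in the $(z,w)$-slot for index $\al$ with the invariance in the $(x,y)$-slot for a \emph{different} index $\bt$, so that $J_\al J_\bt$ appears in one pair of slots while the product is rearranged by \eqref{J123} into $-J_\gm$ on the relevant arguments. Tracking the product of signs $\ea\eb\eg$ against $\ea\eb\eg$ reordered through $J_\al=J_\bt J_\gm=-J_\gm J_\bt$ produces an overall factor of $-1$ where the symmetries would otherwise give $+1$.

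Concretely, I expect that inserting the three structures so that $J_1$ contributes $\varepsilon_1=+1$ while each of $J_2,J_3$ contributes $-1$, and then collapsing $J_\al J_\bt=\pm J_\gm$ via \eqref{J123}, forces the relation $R(x,y,z,w)=-R(x,y,z,w)$ on an arbitrary quadruple of arguments, whence $R=0$. The main obstacle, and the part requiring the most care, is the bookkeeping of signs: one must make sure that after rewriting every $J_\al^2=-I$ and every anticommutation, the accumulated factor is genuinely $-1$ and not $+1$, because an even number of sign flips would give a vacuous identity rather than $R=0$. I would therefore carry out the index manipulation symbolically, choosing the cyclic triple $(\al,\bt,\gm)=(1,2,3)$ so that exactly the asymmetry $\varepsilon_1\neq\varepsilon_2=\varepsilon_3$ is brought into play; this asymmetry is precisely what distinguishes the NH-metric case from the hyper-K\"ahler (hyper-Hermitian) case, where all signs agree and no such contradiction --- hence no forced vanishing --- arises.
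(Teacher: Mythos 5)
Your strategy cannot be carried out: the sign contradiction you hope to extract is structurally excluded, not merely hard to bookkeep. The signs in \eqref{gJJ} are \emph{multiplicative} over the quaternionic relations: $\varepsilon_1\varepsilon_2=-1=\varepsilon_3$, $\varepsilon_2\varepsilon_3=+1=\varepsilon_1$, $\varepsilon_3\varepsilon_1=-1=\varepsilon_2$, \ie $\ea\eb=\eg$ (equivalently $\varepsilon_1\varepsilon_2\varepsilon_3=+1$), while on a \emph{pair} of arguments the ambiguous sign in $J_\al J_\bt=\pm J_\gm$ cancels, $R(\cdot,\cdot,J_\al J_\bt z,J_\al J_\bt w)=R(\cdot,\cdot,J_\gm z,J_\gm w)$. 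Hence chaining \eqref{R-kel} for $\al$ and then $\bt$ in the same slot pair returns $R=\ea\eb\eg R=R$, and since the $(x,y)$-pair and the $(z,w)$-pair are transformed independently, each governed by this same multiplicative sign rule, your ``decisive step'' of mixing the two pairs cannot create a clash either: every closed chain built from \eqref{R-kel} and \eqref{J123} comes back with factor $+1$. (This consistency is exactly the fact, noted in the paper's introduction, that one isometry forces precisely two anti-isometries.) The failure can be made concrete: on $\mathbb{H}$ with $J_1,J_2,J_3$ acting by left multiplication by $i,j,k$, let $(e^0,e^1,e^2,e^3)$ be dual to the basis $(1,i,j,k)$ and set $\Psi=e^0\wedge e^1+e^2\wedge e^3$; one checks $\Psi(J_1x,J_1y)=\Psi(x,y)$ and $\Psi(J_\al x,J_\al y)=-\Psi(x,y)$ for $\al=2,3$, so the nonzero tensor $R(x,y,z,w):=\Psi(x,y)\Psi(z,w)$ is skew in each pair, symmetric under pair exchange, and satisfies \emph{both} identities of \eqref{R-kel} for all $\al$ (the mismatched signs $\ea$ are absorbed because $\ea^2=1$). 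Therefore no argument using only \eqref{R-kel} and \eqref{J123} --- which is all your proposal invokes --- can prove the theorem.

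What is missing is the remaining defining property of a K\"ahler-type (curvature-like) tensor: the first Bianchi identity $\mathop{\s}_{x,y,z}R(x,y,z,w)=0$. That identity is exactly what the tensor $\Psi\otimes\Psi$ above violates (take $x,y,z,w$ equal to $1,i,j,k$: the cyclic sum equals $1\neq0$, reflecting $\Psi\wedge\Psi\neq0$), so any correct proof must invoke it; note that the present paper does not prove the statement at all but imports it from \cite{Ma09}, where the Bianchi identity is part of the definition of the tensors in question and is necessarily used. The role of the hypotheses is then as follows: \eqref{R-kel} alone only confines $R$, viewed as a symmetric bilinear form on $\Lambda^2T_pM$, to the common eigenspace of the commuting involutions $z\wedge w\mapsto J_\al z\wedge J_\al w$ with eigenvalues $(+1,-1,-1)$ --- a nonzero space, as the example shows --- and it is the Bianchi identity that annihilates everything supported there. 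So your closing intuition is correct that the asymmetric NH sign pattern $\varepsilon_1=1$, $\varepsilon_2=\varepsilon_3=-1$ is what ultimately kills $R$ (and why no such vanishing occurs in the hyper-Hermitian case), but that asymmetry can only act through the Bianchi identity, never as a free-standing algebraic contradiction among the identities \eqref{R-kel}.
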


\section{Quaternionic K\"ahler manifolds with NH-metric structure}

Let us consider again only an almost hypercomplex manifold
$(M,H)$. The endomorphism $Q=\lm_1 J_1+\lm_2 J_2+\lm_3 J_3$,
$\lm_i\in\R$, is called a \emph{quaternionic structure} on $(M,H)$
with an admissible basis $H$. A quaternionic structure with the
condition $\nabla Q=0$ is called a \emph{quaternionic K\"ahler
structure} on $(M,H)$. An almost hypercomplex manifold with
quaternionic K\"ahler structure is determined by
\begin{equation}\label{qK}
\left(\n_x J_\al\right)y=\om_\gm(x)J_\bt y-\om_\bt(x)J_\gm y
\end{equation}
for all cyclic permutations $(\al, \bt, \gm)$ of $(1,2,3)$, where
$\om_\al$ are local 1-forms associated to $H=(J_\al)$,
$\al=1,2,3$. \cite{AlMa}

Next, we equip the quaternionic K\"ahler manifold with an
NH-metric structure $G=(g,g_1,g_2,g_3)$, determined by \eqref{gJJ}
and \eqref{gJ}, and obtain a \emph{quaternionic K\"ahler manifold
with NH-metric structure} or a \emph{quaternionic K\"ahler
NH-manifold}.

Having in mind \eqref{qK} and \eqref{nJ}, for a quaternionic
K\"ahler NH-manifold  we obtain the following form of the square
norm of $\n J_\al$:
\begin{equation}\label{nJ-qK}
    \nJ{\al}=4n\left\{\eb\om_\gm(\Omega_\gm)+\eg\om_\bt(\Omega_\bt)\right\},
\end{equation}
where $\Omega_1$, $\Omega_2$, $\Omega_3$ are the corresponding
vectors of $\om_1$, $\om_2$, $\om_3$  regarding $g$, respectively.
Therefore we have immediately the following
\begin{prop}\label{prop-iqK}
A quaternionic K\"ahler NH-manifold is an isotropic hyper-K\"ahler
NH-manifold iff the corresponding vectors of the 1-forms $\om_1$,
$\om_2$ and $\om_3$ with respect to $g$ are isotropic vectors
regarding $g$.\hfill $\Box$
\end{prop}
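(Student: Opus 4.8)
The plan is to reduce the statement to an elementary linear-algebra computation by exploiting the explicit expression \eqref{nJ-qK} for the square norms, which is already at hand. First I would observe that, since each $\Omega_\al$ is by definition the $g$-dual vector of the $1$-form $\om_\al$ (so that $\om_\al(y)=g(\Omega_\al,y)$), we have $\om_\al(\Omega_\al)=g(\Omega_\al,\Omega_\al)$; hence every term on the right-hand side of \eqref{nJ-qK} is the $g$-square norm of one of the vectors $\Omega_1$, $\Omega_2$, $\Omega_3$. Abbreviating $a=g(\Omega_1,\Omega_1)$, $b=g(\Omega_2,\Omega_2)$, $c=g(\Omega_3,\Omega_3)$ and inserting the values $\varepsilon_1=1$, $\varepsilon_2=\varepsilon_3=-1$, I would write \eqref{nJ-qK} out for the three cyclic choices of $(\al,\bt,\gm)$, obtaining
\begin{equation*}
\nJ{1}=-4n(b+c),\qquad \nJ{2}=4n(c-a),\qquad \nJ{3}=4n(b-a).
\end{equation*}

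Next I would impose the defining condition of an isotropic hyper-K\"ahler NH-manifold, namely $\nJ{1}=\nJ{2}=\nJ{3}=0$. After dividing by $4n$ this becomes the homogeneous linear system $b+c=0$, $c-a=0$, $b-a=0$ in the unknowns $a,b,c$. The \emph{if} direction is then immediate: if all three vectors $\Omega_\al$ are isotropic, i.e. $a=b=c=0$, each of the above expressions vanishes. For the \emph{only if} direction I would solve the system: the last two equations give $a=b=c$, and substitution into the first yields $2a=0$, hence $a=b=c=0$; that is, $g(\Omega_\al,\Omega_\al)=0$ for each $\al$, so the vectors $\Omega_1$, $\Omega_2$, $\Omega_3$ are isotropic with respect to $g$.

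The step I expect to require the most care — though it remains elementary — is the \emph{only if} direction, precisely because the three equations are genuinely coupled: a single relation $\nJ{\al}=0$ controls only a signed sum of two of the square norms and so does not by itself force any individual $\Omega_\bt$ to be isotropic. It is the simultaneous vanishing of all three norms that pins every $g(\Omega_\al,\Omega_\al)$ down to zero; equivalently, the $3\times 3$ coefficient matrix of the system is nonsingular, so the only solution is $a=b=c=0$. I would also flag the mild abuse of terminology: if some $\Omega_\al$ happens to vanish then $g(\Omega_\al,\Omega_\al)=0$ trivially, so here ``isotropic'' is to be read as the null condition $g(\Omega_\al,\Omega_\al)=0$ rather than as the literal requirement $\Omega_\al\neq 0$.
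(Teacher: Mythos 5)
Your proposal is correct and follows the same route as the paper: the paper states the proposition as an immediate consequence of \eqref{nJ-qK}, and your argument simply makes that step explicit by writing out the three instances $\nJ{1}=-4n(b+c)$, $\nJ{2}=4n(c-a)$, $\nJ{3}=4n(b-a)$ and solving the resulting nonsingular linear system. Your two side remarks — that no single equation $\nJ{\al}=0$ forces any individual $\Omega_\bt$ to be isotropic, and that ``isotropic'' here means $g(\Omega_\al,\Omega_\al)=0$ allowing $\Omega_\al=0$ — are accurate and correctly identify what the paper's ``immediately'' glosses over.
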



Having in mind \eqref{qK}, we obtain the following property for
all cyclic permutations $(\al,\bt,\gm)$ of $(1,2,3)$:
\begin{equation}\label{RJ}
      R(x,y)J_{\al} z=J_{\al} R(x,y)z-\eta_{\bt} (x,y)J_{\gm} z+\eta_{\gm} (x,y)J_\bt z
\end{equation}
where
\begin{equation}\label{eta}
\eta_{\bt}(x,y)=\dd\om_{\bt} (x,y)+\om_{\gm} (x)\om_{\al}
(y)-\om_{\al} (x)\om_{\gm} (y)
\end{equation}
are 2-forms associated to the local 1-forms $\om_1$, $\om_2$,
$\om_3$ and therefore
\begin{equation}\label{RJJ}
    R(x,y,z,w)-\ea
    R(x,y,J_{\al}z,J_{\al}w)=\eta_{\bt}(x,y)g_{\bt}(z,w)+\eta_{\gm}(x,y)g_{\gm}(z,w).
\end{equation}
According to the antisymmetry of $R$ by the third and the forth
entries, we establish that $\eta_2=\eta_3=0$, \ie
\begin{lem}
The local 1-forms $\om_1$, $\om_2$ and  $\om_3$, determining a
quaternionic K\"ahler NH-manifold, satisfy the following
identities
\begin{equation}\label{d-om}
\begin{array}{l}
    \dd\om_2(x,y)=-\om_{3} (x)\om_{1}(y)+\om_{1} (x)\om_{3} (y),\\[4pt]
    \dd\om_3(x,y)=-\om_{1} (x)\om_{2}(y)+\om_{2} (x)\om_{1} (y).\\[4pt]
\end{array}
\end{equation}\hfill $\Box$
\end{lem}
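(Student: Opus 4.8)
The plan is to derive the two identities in \eqref{d-om} by first establishing that the 2-forms $\eta_2$ and $\eta_3$ from \eqref{eta} vanish, and then merely rewriting $\eta_2=0$ and $\eta_3=0$ in terms of $\dd\om_2$ and $\dd\om_3$.

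First I would record the symmetry behaviour of the associated forms. Using \eqref{gJ} together with the symmetry of $g$, one finds $g_\al(y,x)=-\ea g_\al(x,y)$, so that $g_1$ is a 2-form (antisymmetric) while $g_2$ and $g_3$ are symmetric, being the Norden metrics. Next I would note that the left-hand side of \eqref{RJJ}, namely $R(x,y,z,w)-\ea R(x,y,J_\al z,J_\al w)$, is antisymmetric in the pair $(z,w)$: this comes from the antisymmetry of $R$ in its last two entries together with $R(x,y,J_\al w,J_\al z)=-R(x,y,J_\al z,J_\al w)$.

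Then I would specialise \eqref{RJJ} to $\al=1$, so that $\bt=2$, $\gm=3$ and the right-hand side reads $\eta_2(x,y)g_2(z,w)+\eta_3(x,y)g_3(z,w)$. Since $g_2$ and $g_3$ are symmetric in $(z,w)$ whereas the left-hand side is antisymmetric in $(z,w)$, the two sides can agree only if this combination vanishes, giving $\eta_2(x,y)g_2(z,w)+\eta_3(x,y)g_3(z,w)=0$ for all $z,w$. The main obstacle is to pass from this single tensorial relation to the two separate scalar statements $\eta_2=0$ and $\eta_3=0$; this rests on the linear independence of $g_2$ and $g_3$ as bilinear forms, which follows from the non-degeneracy of $g$ and the fact that $aJ_2+bJ_3=0$ forces $a=b=0$ (so $a\,g_2+b\,g_3=0$ forces $a=b=0$).

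Finally, substituting $\eta_2=0$ and $\eta_3=0$ into \eqref{eta} for the cyclic permutations $(\al,\bt,\gm)=(1,2,3)$ and $(2,3,1)$ respectively, and solving for $\dd\om_2$ and $\dd\om_3$, reproduces exactly the identities \eqref{d-om}. Apart from the separation step just described, every step is a direct reading-off of the symmetry types, so I do not anticipate further difficulty.
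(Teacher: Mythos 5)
Your proposal is correct and is essentially the paper's own argument: the paper disposes of the lemma in one sentence (``according to the antisymmetry of $R$ by the third and the fourth entries, we establish that $\eta_2=\eta_3=0$''), which is precisely your comparison of the antisymmetric (in $z,w$) left-hand side of \eqref{RJJ} with the symmetric Norden metrics $g_2$, $g_3$ on the right, followed by substitution into \eqref{eta}. Your explicit justification of the separation step --- the linear independence of $g_2$ and $g_3$, via non-degeneracy of $g$ and the fact that $aJ_2+bJ_3=0$ forces $a=b=0$ --- fills in a detail the paper leaves implicit.
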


Then, according to \eqref{d-om}, equations \eqref{RJJ} take the
form
\begin{gather}
    R(x,y,J_{1}z,J_{1}w)=R(x,y,z,w),\label{RJJ1}\\[4pt]
    R(x,y,J_{2}z,J_{2}w)=R(x,y,J_{3}z,J_{3}w)
    =-R(x,y,z,w)+\eta_{1}(x,y)g_{1}(z,w).\label{RJJ23}
\end{gather}

Having in mind \eqref{RJJ1}, \eqref{RJJ23} and \eqref{R-kel}, we
have immediately
\begin{lem}\label{lem-Rkel}
The curvature tensor $R$ of a quaternionic K\"ahler NH-manifold is
of K\"ahler-type iff $\eta_1=0$, \ie the following condition is
valid
\begin{equation}\label{om1}
\dd\om_1(x,y)=-\om_{2} (x)\om_{3} (y)+\om_{3} (x)\om_{2} (y).
\end{equation}\hfill $\Box$
\end{lem}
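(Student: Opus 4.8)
The plan is to read the K\"ahler-type condition \eqref{R-kel} component by component in $\al$ and match each component against the already-derived curvature identities \eqref{RJJ1} and \eqref{RJJ23}. First I would recall that a K\"ahler-type tensor must satisfy $R(x,y,z,w)=\ea R(x,y,J_\al z,J_\al w)$ for every $\al=1,2,3$, so I would test these three equations separately. For $\al=1$ one has $\ea=1$, and the required identity $R(x,y,z,w)=R(x,y,J_1 z,J_1 w)$ is precisely \eqref{RJJ1}, which holds automatically on any quaternionic K\"ahler NH-manifold; thus the $J_1$-component imposes no constraint.

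For $\al=2$ and $\al=3$ one has $\ea=-1$, so the K\"ahler-type condition demands $R(x,y,J_\al z,J_\al w)=-R(x,y,z,w)$. Comparing with \eqref{RJJ23}, which gives $R(x,y,J_\al z,J_\al w)=-R(x,y,z,w)+\eta_1(x,y)g_1(z,w)$ for both $\al=2$ and $\al=3$, I see that these two components hold exactly when the extra term $\eta_1(x,y)g_1(z,w)$ vanishes identically in all four arguments. I would then argue that $\eta_1(x,y)g_1(z,w)=0$ for all $x,y,z,w$ is equivalent to $\eta_1=0$: since $g_1=g(J_1\cdot,\cdot)$ is the associated K\"ahler $2$-form of a neutral, hence nondegenerate, metric, it is not identically zero, so there exist $z,w$ with $g_1(z,w)\neq 0$, and fixing such a pair forces $\eta_1(x,y)=0$ for all $x,y$. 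Conversely, $\eta_1=0$ reduces \eqref{RJJ23} to the required anti-invariance, which together with \eqref{RJJ1} yields all of \eqref{R-kel}; to obtain the first-pair form $R(x,y,z,w)=\ea R(J_\al x,J_\al y,z,w)$ from the second-pair form I would invoke the pair symmetry $R(x,y,z,w)=R(z,w,x,y)$ of the Levi-Civita curvature, so that both equalities in \eqref{R-kel} are covered at once.

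Finally, I would translate the vanishing of $\eta_1$ into the stated $1$-form identity by substituting the cyclic permutation $(\al,\bt,\gm)=(3,1,2)$ into the defining formula \eqref{eta}, which gives $\eta_1(x,y)=\dd\om_1(x,y)+\om_2(x)\om_3(y)-\om_3(x)\om_2(y)$; setting this equal to zero is exactly \eqref{om1}.

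I do not expect a serious obstacle, since the argument is essentially a direct matching of the curvature identities \eqref{RJJ1}, \eqref{RJJ23} against \eqref{R-kel}. The only points requiring care are the nondegeneracy step ensuring $\eta_1(x,y)g_1(z,w)\equiv 0\Rightarrow\eta_1\equiv 0$, and the correct identification of the cyclic index assignment $\bt=1$ in \eqref{eta}; both are routine once stated explicitly.
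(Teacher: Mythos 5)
Your proposal is correct and follows essentially the same route the paper treats as immediate: matching \eqref{RJJ1} and \eqref{RJJ23} against the K\"ahler-type condition \eqref{R-kel} for each $\al$, so that the obstruction is exactly the term $\eta_1(x,y)g_1(z,w)$. The details you supply (nondegeneracy of $g_1$ to pass from $\eta_1\otimes g_1\equiv0$ to $\eta_1\equiv0$, the pair symmetry of $R$ to cover the first-pair identity in \eqref{R-kel}, and the index assignment $(\al,\bt,\gm)=(3,1,2)$ in \eqref{eta}) are precisely the routine steps the paper leaves unstated.
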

According to \lemref{lem-Rkel} and \thmref{thm-R=0}, we have
\begin{prop}\label{prop-R=0}
The necessary and sufficient condition an arbitrary quaternionic
K\"ahler NH-manifold  be flat is condition \eqref{om1}.\hfill
$\Box$
\end{prop}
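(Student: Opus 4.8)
The plan is to obtain the proposition immediately by combining the equivalence of \lemref{lem-Rkel} with the vanishing result of \thmref{th-0}, using only the trivial fact that flatness of $(M,H,G)$ means $R=0$.

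For the sufficiency, I would assume that condition \eqref{om1} holds. Then \lemref{lem-Rkel} guarantees that the curvature tensor $R$ of the quaternionic K\"ahler NH-manifold is of K\"ahler-type, \ie it satisfies \eqref{R-kel}. Since a quaternionic K\"ahler NH-manifold is in particular an almost hypercomplex NH-manifold, \thmref{th-0} applies and forces $R$ to vanish identically; hence the manifold is flat. This also recovers \thmref{thm-R=0} as the special case in which $(M,H,G)$ is actually hyper-K\"ahler.

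For the necessity, I would start from $R=0$. The identically vanishing tensor trivially fulfils the K\"ahler-type identities \eqref{R-kel}, so $R$ is of K\"ahler-type; the equivalence in \lemref{lem-Rkel} then yields $\eta_1=0$, which is exactly \eqref{om1}.

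I do not expect a genuine obstacle here, as the analytic substance has already been absorbed into \thmref{th-0}. The only point deserving care is the backward direction of \lemref{lem-Rkel} in the necessity argument: one must explicitly note that the zero curvature tensor qualifies as a K\"ahler-type tensor, so that the equivalence can legitimately be read from right to left. With this remark the two implications combine into the claimed necessary and sufficient condition.
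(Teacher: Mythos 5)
Your proof is correct and takes essentially the same route as the paper: Proposition~\ref{prop-R=0} is obtained by combining the equivalence in \lemref{lem-Rkel} with the vanishing of K\"ahler-type tensors, the necessity direction resting on the observation that the zero curvature tensor trivially satisfies \eqref{R-kel}. The only (cosmetic) difference is that the paper cites \thmref{thm-R=0} while you invoke the more general \thmref{th-0}, which is in fact the cleaner and more accurate reference for this step.
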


\begin{lem}\label{l-rho-eta}
The Ricci tensor and the 2-form $\eta_1$, defined by \eqref{eta},
have the following relation on any quaternionic K\"ahler
NH-manifold:
\begin{equation}\label{rho-eta}
    \rho(x,y)=n\eta_1(J_1x, y).
\end{equation}
\end{lem}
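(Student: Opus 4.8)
The plan is to obtain the Ricci tensor by contracting the curvature relation \eqref{RJJ23}, and to identify the resulting curvature traces by means of a K\"ahler-type Ricci identity attached to the Hermitian structure $J_1$. Throughout I use a basis $\{e_i\}$ of $T_pM$ and write traces with $g^{ij}$.

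First I would extract from \eqref{RJJ1} the ``$(1,1)$-property'' of the curvature: substituting $w\mapsto J_1 w$ gives
\[
R(x,y,J_1 z,w)=-R(x,y,z,J_1 w),
\]
so that $R(x,y,\cdot,\cdot)$ is a $J_1$-invariant $2$-form. Contracting the first Bianchi identity $R(x,y,z,w)+R(y,z,x,w)+R(z,x,y,w)=0$ against the raised K\"ahler form of $J_1$ and converting the resulting $g_1$-trace into a metric trace by the $(1,1)$-property yields the identity
\[
g^{ij}R(x,y,e_i,J_1 e_j)=2\rho(x,J_1 y),
\]
which is the central computational ingredient. At the same time, since \eqref{RJJ1} together with the pair symmetry of $R$ makes $R$ invariant under $J_1$ in each pair of slots, the Ricci tensor is $J_1$-invariant, $\rho(J_1 x,J_1 y)=\rho(x,y)$.

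Next I would contract \eqref{RJJ23} directly. Setting $z=e_i$, $w=J_1 e_j$ and summing against $g^{ij}$, and using \eqref{gJJ} in the form $g_1(e_i,J_1 e_j)=g(J_1 e_i,J_1 e_j)=g(e_i,e_j)$ (so that $g^{ij}g_1(e_i,J_1 e_j)=4n$), I obtain
\[
4n\,\eta_1(x,y)=g^{ij}R(x,y,e_i,J_1 e_j)+g^{ij}R(x,y,J_2 e_i,J_2 J_1 e_j).
\]
The first trace is exactly the left-hand side of the K\"ahler-type identity above. The second trace I would evaluate purely algebraically: by \eqref{J123} one has $J_2 J_1=-J_3$ and $J_3 J_2=-J_1$, and by \eqref{gJ} the compatibility $g(J_2 u,v)=g(u,J_2 v)$ holds; lowering an index in the $(2,0)$-tensor $g^{ij}(J_2 e_i)\otimes(J_3 e_j)$ and simplifying with these relations identifies it with $-g^{ij}e_i\otimes(J_1 e_j)$, whence the second trace equals the first. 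Thus the right-hand side is $2g^{ij}R(x,y,e_i,J_1 e_j)=4\rho(x,J_1 y)$.

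Combining the two displays gives $\rho(x,J_1 y)=n\,\eta_1(x,y)$; replacing $x$ by $J_1 x$ and using the $J_1$-invariance $\rho(J_1 x,J_1 y)=\rho(x,y)$ then yields $\rho(x,y)=n\,\eta_1(J_1 x,y)$, which is \eqref{rho-eta}. I expect the decisive difficulty to be the bookkeeping of signs, coming from the indicators $\ea$ of \eqref{gJJ}--\eqref{gJ} and the multiplication rules \eqref{J123}: the sign in the algebraic reduction of the second trace is critical, since the opposite sign would make the two traces cancel and force the incorrect conclusion $\eta_1=0$, while the constant $2$ in the K\"ahler-type Ricci identity is precisely what converts the dimension factor $4n$ into the stated coefficient $n$.
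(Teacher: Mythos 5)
Your proposal is correct and follows essentially the same route as the paper's proof: contract \eqref{RJJ23} with $z=e_i$, $w=J_1e_j$ against $g^{ij}$, identify the second trace with $g^{ij}R(x,y,e_i,J_1e_j)$ via the quaternionic relations and the adjointness properties of $J_2,J_3$, obtain $g^{ij}R(x,y,e_i,J_1e_j)=2\rho(x,J_1y)$ from the first Bianchi identity together with \eqref{RJJ1}, and conclude using $\rho(J_1x,J_1y)=\rho(x,y)$. The only cosmetic difference is that you derive this $J_1$-invariance of $\rho$ directly from the curvature invariance in both pairs of slots, whereas the paper deduces it from the symmetry of $\rho$ and the antisymmetry of $\eta_1$ applied to \eqref{eta-rho}; both arguments are valid.
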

\begin{proof}
From \eqref{RJJ23} for $z \rightarrow e_i$, $w \rightarrow J_1e_j$
by contraction with $g^{ij}$ we have
\begin{equation}\label{R123a}
\begin{array}{l}
    -g^{ij}R(x,y,J_{2}e_i,J_{3}e_j)=g^{ij}R(x,y,J_{3}e_i,J_{2}e_j)\medskip\\
\phantom{-g^{ij}R(x,y,J_{2}e_i,J_{3}e_j)}
    =-g^{ij}R(x,y,e_i,J_1e_j)+4n\eta_{1}(x,y).
\end{array}
\end{equation}
Having in mind the antisymmetry on the second pair arguments of
$R$ and $J_1=J_2J_3$, we get
\begin{equation*}
\begin{array}{l}
    -g^{ij}R(x,y,J_{2}e_i,J_{3}e_j)=g^{ij}R(x,y,J_{3}e_i,J_{2}e_j)
    =g^{ij}R(x,y,e_i,J_1e_j).
\end{array}
\end{equation*}
and therefore from \eqref{R123a} we have
\begin{equation}\label{a-eta}
    g^{ij}R(x,y,e_i,J_1e_j)=2n\eta_1(x,y).
\end{equation}

After that, from \eqref{a-eta}, applying the properties of the
curvature tensor $R$, \eqref{gJJ} for $\al=1$ and \eqref{RJJ1}, we
obtain consequently
\[
\begin{array}{l}
2n\eta_1(x,y)=g^{ij}R(x,y,e_i,J_1e_j)=g^{ij}\{-R(x,e_i,J_1e_j,y)-R(x,J_1e_j,y,e_i)\}\medskip\\
=g^{ij}R(x,e_i,y,J_1e_j)+g^{ij}R(x,e_j,y,J_1e_i)=2g^{ij}R(x,e_i,y,J_1e_j)\medskip\\
=-2g^{ij}R(e_i,x,y,J_1e_j)=2g^{ij}R(e_i,x,J_1y,e_j)=2\rho(x,J_1y),\medskip\\
\end{array}
\]
\ie
\begin{equation}\label{eta-rho}
    \eta_1(x,y)=\frac{1}{n}\rho(x,J_1y).
\end{equation}
Because of the symmetry of $\rho$ and the antisymmetry of $\eta_1$
we have the property
\begin{equation}\label{etaJ1}
\eta_1(x,J_1y)=-\eta_1(J_1x,y)
\end{equation}
and therefore
\begin{equation}\label{rJJ1}
    \rho(J_1x,J_1y)=\rho(x,y).
\end{equation}
Hence, from \eqref{eta-rho}, \eqref{etaJ1} and \eqref{rJJ1}, we
obtain \eqref{rho-eta}.
\end{proof}

\begin{prop}\label{prop-r=0}
    A quaternionic K\"ahler NH-manifold  is
    Ricci-flat iff it is flat.
\end{prop}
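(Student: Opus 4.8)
The plan is to read this off as a clean corollary of the three immediately preceding results, so essentially no new computation is needed. One direction is trivial: if the manifold is flat, then $R=0$, and since the Ricci tensor $\rho$ is a trace of $R$, we get $\rho=0$, so the manifold is Ricci-flat. The whole content of the proposition therefore lies in the converse.

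For the converse the bridge is relation \eqref{rho-eta} of \lemref{l-rho-eta}. I would assume the manifold is Ricci-flat, i.e. $\rho=0$, and substitute this into \eqref{rho-eta} to obtain $n\eta_1(J_1x,y)=0$ for all $x,y$. Since the dimension is $4n$ with $n\geq 1$, we have $n\neq 0$, and $J_1$ is invertible (from \eqref{J123}, $J_1^2=-I$, so $J_1^{-1}=-J_1$); replacing $x$ by $-J_1x$ then yields $\eta_1(x,y)=0$ for all $x,y$, that is $\eta_1=0$. Thus Ricci-flatness is equivalent to the vanishing of the $2$-form $\eta_1$.

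It remains to invoke \lemref{lem-Rkel}, by which $\eta_1=0$ is exactly condition \eqref{om1} (equivalently, $R$ is of K\"ahler-type), and then \propref{prop-R=0}, by which condition \eqref{om1} is the necessary and sufficient condition for flatness. Chaining these equivalences gives $\rho=0 \Leftrightarrow \eta_1=0 \Leftrightarrow \eqref{om1} \Leftrightarrow R=0$, which is the claimed statement.

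I do not expect a genuine obstacle here, since the proposition is structurally a corollary of \lemref{l-rho-eta}, \lemref{lem-Rkel} and \propref{prop-R=0}. The only point that deserves an explicit word is the passage from $\eta_1(J_1x,y)=0$ to $\eta_1=0$, which rests on the invertibility of $J_1$ and on $n\neq 0$; every other link in the chain is a direct appeal to an already-proved relation.
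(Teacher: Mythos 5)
Your proof is correct and follows essentially the same route as the paper: both arguments hinge on \lemref{l-rho-eta} to translate Ricci-flatness into the vanishing of $\eta_1$, and then on the vanishing of K\"ahler-type tensors (\thmref{th-0}) to conclude flatness. The only difference is packaging --- the paper substitutes \eqref{rho-eta} directly into \eqref{RJJ23} to obtain \eqref{RJJ23r} and invokes \eqref{R-kel} and \thmref{th-0} explicitly, whereas you chain through the already-proved \lemref{lem-Rkel} and \propref{prop-R=0}, which encapsulate exactly those steps.
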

\begin{proof}
Using \lemref{l-rho-eta}, property \eqref{RJJ23} takes the form
\begin{equation}\label{RJJ23r}
\begin{array}{l}
    R(x,y,J_{2}z,J_{2}w)=R(x,y,J_{3}z,J_{3}w)\medskip\\
    \phantom{R(x,y,J_{2}z,J_{2}w)}
    =-R(x,y,z,w)-\frac{1}{n}\rho(J_1x,y)g(J_{1}z,w).
\end{array}
\end{equation}
Then, according to \eqref{RJJ23r}, \eqref{R-kel} and
\thmref{th-0}, we obtain the equivalence in the statement.
\end{proof}

\begin{thm}\label{thm-Ein}
Quaternionic K\"ahler manifolds with NH-metric structure are
Einstein for dimension $4n\geq 8$.
\end{thm}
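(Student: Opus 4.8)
The goal is the pointwise identity $\rho=\frac{\tau}{4n}\,g$, where $\tau=g^{ij}\rho(e_i,e_j)$ is the scalar curvature; the contracted second Bianchi identity then forces $\tau$ to be constant for $4n>2$, which is the Einstein condition. By \lemref{l-rho-eta} we have $\rho(x,y)=n\,\eta_1(J_1x,y)$, so, introducing the Ricci operator $\rho^{\sharp}$ by $\rho(x,y)=g(\rho^{\sharp}x,y)$, the desired identity is equivalent to $\eta_1=-\frac{\tau}{4n^{2}}\,g_1$, i.e. to the proportionality of the $2$-forms $\eta_1$ and $g_1$.

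First I would determine how $\rho$ behaves under the three structures. Equation \eqref{RJJ1} yields \eqref{rJJ1}, i.e. $\rho(J_1x,J_1y)=\rho(x,y)$; since $J_1$ is $g$-skew\-adjoint this means $[\rho^{\sharp},J_1]=0$. Contracting \eqref{RJJ23} in the same way and inserting \lemref{l-rho-eta}, together with $J_1=J_2J_3$ and \eqref{gJJ}, I obtain the two companions
\begin{equation}\label{rhoJ2J3}
\rho(J_2x,J_2y)=\rho(J_3x,J_3y)=\rho(x,y)-\tfrac{\tau}{2n}\,g(x,y).
\end{equation}
As $J_2,J_3$ are $g$-selfadjoint, \eqref{rhoJ2J3} reads $\{\rho^{\sharp},J_2\}=\tfrac{\tau}{2n}J_2$ and $\{\rho^{\sharp},J_3\}=\tfrac{\tau}{2n}J_3$. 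Writing $\rho^{\sharp}=\tfrac{\tau}{4n}I+P_0$ collapses the system to $[P_0,J_1]=0$, $\{P_0,J_2\}=\{P_0,J_3\}=0$; hence everything reduces to proving that the selfadjoint endomorphism $P_0$, which commutes with $J_1$ and anticommutes with $J_2$ and $J_3$, vanishes.

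To eliminate $P_0$ I would bring in the full curvature rather than its traces. With $\eta_2=\eta_3=0$, \eqref{RJ} says $[R(x,y),J_1]=0$, $[R(x,y),J_2]=\eta_1(x,y)J_3$, $[R(x,y),J_3]=-\eta_1(x,y)J_2$, so the part $W(x,y):=R(x,y)-\tfrac12\eta_1(x,y)J_1$ commutes with all $J_\al$; consequently its $(0,4)$-form satisfies the K\"ahler-type relations \eqref{R-kel} in the last two arguments for every $\al$. The only curvature symmetry $W$ can fail is the first Bianchi identity: substituting $R(x,y)=W(x,y)+\tfrac12\eta_1(x,y)J_1$ into $\mathop{\s}_{x,y,z}R(x,y)z=0$ leaves a single relation coupling the Bianchi defect of $W$ to the cyclic sum $\mathop{\s}_{x,y,z}\eta_1(x,y)z$. \emph{This is the heart of the matter:} one must combine that Bianchi relation with the K\"ahler-type symmetry of $W$ and \thmref{th-0} (applied to the genuinely K\"ahler-type correction of $W$) so as to conclude $P_0=0$. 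I expect the bookkeeping to yield a linear identity of the schematic shape $(4n-4)\,\eta_1(x,y)=\bigl(\text{traces}\bigr)g_1(x,y)$, whose coefficient is invertible exactly when $4n\geq8$; this simultaneously forces $\eta_1\parallel g_1$ and pinpoints the dimensional restriction, in agreement with the classical fact that in real dimension four the quaternionic K\"ahler condition carries no Einstein information. Granting $\eta_1=-\frac{\tau}{4n^{2}}g_1$, \lemref{l-rho-eta} delivers $\rho=\frac{\tau}{4n}g$, and the theorem follows.
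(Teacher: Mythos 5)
Your reduction of the theorem to the vanishing of the traceless part $P_0$ of the Ricci operator is sound, but the step that kills $P_0$ --- which you yourself flag as ``the heart of the matter'' --- is never carried out: the argument ends with ``I expect the bookkeeping to yield\dots'' rather than a derivation, and the sketch offered for that step rests on a false premise. The $(0,4)$-tensor $W(x,y,z,w)=R(x,y,z,w)-\tfrac12\eta_1(x,y)g_1(z,w)$ fails not only the first Bianchi identity but also the pair symmetry $W(x,y,z,w)=W(z,w,x,y)$: since $g_1$ is nondegenerate, $\eta_1(x,y)g_1(z,w)=\eta_1(z,w)g_1(x,y)$ holds iff $\eta_1$ is proportional to $g_1$, which by \lemref{l-rho-eta} is exactly the Einstein condition you are trying to prove. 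Hence the K\"ahler-type relations in the \emph{first} pair of arguments are unavailable as well, and there is no evident correction of $W$ to which \thmref{th-0} could be applied. The gap cannot be closed by the commutation algebra alone: in an adapted basis $\{e,J_1e,J_2e,J_3e\}$ of each quaternionic $4$-plane, the endomorphism $\mathrm{diag}(-1,-1,1,1)$ is $g$-selfadjoint, traceless, commutes with $J_1$ and anticommutes with $J_2,J_3$, so your relations on $P_0$ admit nonzero solutions; genuine curvature input is required and none is supplied.

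A second, reparable, inaccuracy sits in your trace identities. Your companion identity $\rho(J_2x,J_2y)=\rho(x,y)-\tfrac{\tau}{2n}g(x,y)$ is in fact \emph{true}, but it does not come from contracting \eqref{RJJ23} ``in the same way'' as \eqref{rJJ1}: the ordinary Ricci contraction of \eqref{RJJ23} (equivalently of \eqref{RJJJJ23}, as in the paper) produces the hybridness relation \eqref{rho4J23}, namely $\rho(J_2x,J_2y)=\rho(J_3x,J_3y)=-\rho(x,y)$ for $n>1$, with no trace term at all --- this is also where the dimensional restriction enters, and it makes $\rho^{\sharp}$ \emph{commute}, not anticommute, with $J_2,J_3$. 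Your $\tau$-identity requires instead contracting \eqref{RJJ23} over its first two arguments against $J_1$, using $g^{ij}R(x,y,e_i,J_1e_j)=2n\,\eta_1(x,y)$ (equation \eqref{a-eta} from the proof of \lemref{l-rho-eta}) together with \eqref{eta-rho}. Ironically, had you derived both trace identities, the theorem would follow at once with no need for $W$: combining $\rho(J_2x,J_2y)=-\rho(x,y)$ with $\rho(J_2x,J_2y)=\rho(x,y)-\tfrac{\tau}{2n}g(x,y)$ gives $\rho=\tfrac{\tau}{4n}g$ on the spot --- a route even shorter than the paper's, which instead evaluates the combination $A(x,z)$ of curvature terms in two ways to obtain $\rho(x,x)g(z,z)=g(x,x)\rho(z,z)$ and hence $\rho=\lm g$. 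As written, however, your proposal contains neither completion.
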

\begin{proof}
By virtue of \eqref{RJJ1}, \eqref{RJJ23r}  and \eqref{rJJ1} we
obtain the following properties
\begin{gather}\label{RJJJJ1}
    R(J_{1}x,J_{1}y,J_{1}z,J_{1}w)=R(x,y,z,w),\medskip\\
\begin{array}{l}\label{RJJJJ23}
    R(J_{2}x,J_{2}y,J_{2}z,J_{2}w)=R(J_{3}x,J_{3}y,J_{3}z,J_{3}w)\medskip\\
    =R(x,y,z,w)-\frac{1}{n}g(x,J_1y)\rho(J_1z,w)+\frac{1}{n}\rho(J_2x,J_3y)g(J_1z,w).
\end{array}
\end{gather}

Hence, for the Ricci tensor we have \eqref{rJJ1} and
\begin{equation}\label{rho4J23}
    (n^2-1)\rho(J_{2}y,J_{2}z)=(n^2-1)\rho(J_{3}y,J_{3}z)
    =-(n^2-1)\rho(y,z).
\end{equation}
Then for $n>1$ the Ricci tensor is hybrid with respect to $J_2$
and $J_3$, \ie
\begin{equation*}\label{rhoJ2J3}
    \rho(J_{2}y,J_{2}z)=\rho(J_{3}y,J_{3}z)
    =-\rho(y,z).
\end{equation*}

The conditions \eqref{RJJ1}, \eqref{RJJ23} and \eqref{rho-eta}
imply for $n>1$ the following
\begin{equation*}\label{4Rrho}
    A(x,z)=-\frac{2}{n}\rho(x,x)g(z,z)=-\frac{2}{n}g(x,x)\rho(z,z),
\end{equation*}
where
\begin{equation*}\label{A}
\begin{array}{l}
A(x,z)=R(x,J_{1}x,z,J_{1}z)-R(x,J_{1}x,J_{2}z,J_{3}z)\\[4pt]
    \phantom{A(x,z)=}
        -R(J_{2}x,J_{3}x,z,J_{1}z)+R(J_{2}x,J_{3}x,J_{2}z,J_{3}z)\\[4pt]
\end{array}
\end{equation*}

Then for arbitrary non-isotropic vectors we have $\rho=\lm g$,
$\lm\in\R$.
\end{proof}

By \thmref{thm-Ein}, identity \eqref{RJJ23r} implies the following
corollary for $n\geq 2$:
\begin{equation}\label{RJJ23t}
\begin{split}
    &R(x,y,J_{2}z,J_{2}w)=R(x,y,J_{3}z,J_{3}w)\medskip\\
    &\phantom{R(x,y,J_{2}z,J_{2}w)}
    =-R(x,y,z,w)-\frac{\tau}{4n^2}g(J_1x,y)g(J_{1}z,w).
\end{split}
\end{equation}

Therefore from \eqref{RJJ23t}, using \eqref{R-kel} and
\thmref{th-0}, we obtain the following
\begin{prop}\label{prop-t=0}
    A quaternionic K\"ahler NH-manifold  of dimension $4n\geq 8$ is
    scalar flat iff it is flat. \hfill$\Box$
\end{prop}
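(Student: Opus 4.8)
The plan is to derive a contracted version of equation \eqref{RJJ23t} and feed it into \thmref{th-0}, exactly mirroring the structure of the proofs of \propref{prop-R=0} and \propref{prop-r=0}. The key observation is that scalar flatness ($\tau=0$) collapses the extra term in \eqref{RJJ23t}, turning $R$ into a K\"ahler-type tensor, which must then vanish by \thmref{th-0}.

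First I would assume $\tau = 0$. Substituting into \eqref{RJJ23t} immediately gives
\begin{equation*}
    R(x,y,J_{2}z,J_{2}w)=R(x,y,J_{3}z,J_{3}w)=-R(x,y,z,w),
\end{equation*}
which is precisely the condition that $R$ satisfies the $\al=2,3$ parts of the K\"ahler-type relation \eqref{R-kel}; combined with \eqref{RJJ1} (the $\al=1$ part, which holds on every quaternionic K\"ahler NH-manifold), this shows that $R$ is a K\"ahler-type tensor in the full sense of \eqref{R-kel}. By \thmref{th-0}, every K\"ahler-type tensor on an almost hypercomplex NH-manifold is zero, so $R=0$ and the manifold is flat. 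The converse is trivial: if the manifold is flat then $R\equiv 0$, so in particular the scalar curvature $\tau$ vanishes. This establishes the equivalence.

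The main subtlety to watch is the logical dependence on dimension: \eqref{RJJ23t} is only valid for $4n\geq 8$ (i.e.\ $n\geq 2$), since it is derived from \eqref{RJJ23r} via the Einstein condition $\rho=\tfrac{\tau}{4n}g$ of \thmref{thm-Ein}, which itself requires $n>1$. This is exactly why the statement is restricted to $4n\geq 8$; for $n=1$ one cannot pass from the Ricci tensor to the scalar term in \eqref{RJJ23t}, and the argument would instead have to proceed through \propref{prop-r=0} at the Ricci level. Since the hard conceptual work—establishing the Einstein property and the identity \eqref{RJJ23t}—has already been done, the remaining step is essentially a one-line reduction to the already-proved vanishing theorem, so I expect no genuine obstacle beyond correctly invoking \thmref{th-0} once the $\tau=0$ hypothesis has killed the offending term.
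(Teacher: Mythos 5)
Your proposal is correct and follows exactly the paper's own argument: the paper proves this proposition by observing that $\tau=0$ kills the extra term in \eqref{RJJ23t}, so that $R$ satisfies \eqref{R-kel} and hence vanishes by \thmref{th-0}, with the converse being trivial. Your additional remarks on the dimension restriction (via \thmref{thm-Ein}) also match the paper's logic, so there is nothing to correct.
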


\begin{prop}\label{prop-d-om}
    A quaternionic K\"ahler NH-manifold  of dimension $4n\geq 8$
    is determined by the local 1-forms satisfying the conditions
    \eqref{d-om} and
\begin{equation*}\label{om1=}
\dd\om_1(x,y)=-\om_{2} (x)\om_{3} (y)+\om_{3} (x)\om_{2}
(y)-\frac{\tau}{4n^2}g(J_1x,y).
\end{equation*} \hfill$\Box$
\end{prop}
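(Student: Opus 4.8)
The plan is to obtain both groups of conditions directly from results already proved in this section, with essentially no fresh computation. The two identities \eqref{d-om} are simply the vanishing $\eta_2=\eta_3=0$ (established just after \eqref{RJJ}) rewritten through the definition \eqref{eta}, so they are already in hand. All the content therefore sits in the third structure equation, the displayed formula for $\dd\om_1$, which I would extract from the Einstein property together with \lemref{l-rho-eta}.

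First I would invoke \thmref{thm-Ein}: because $4n\geq 8$ the manifold is Einstein, so $\rho=\lm g$ for some $\lm\in\R$. Contracting with $g^{ij}$ and using $g^{ij}g_{ij}=4n$ pins down the constant as $\lm=\frac{\tau}{4n}$, whence $\rho=\frac{\tau}{4n}g$. Next I would invert the relation $\rho(x,y)=n\eta_1(J_1x,y)$ of \eqref{rho-eta}: substituting $x\mapsto -J_1x$ and using $J_1^2=-I$ collapses the right-hand side to $n\eta_1(x,y)$, giving $\eta_1(x,y)=-\frac{1}{n}\rho(J_1x,y)$. Feeding in the Einstein form of $\rho$ then yields $\eta_1(x,y)=-\frac{\tau}{4n^2}g(J_1x,y)$.

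Finally I would unfold \eqref{eta} in the case $\bt=1$, where the cyclic triple is $(\al,\bt,\gm)=(3,1,2)$, to read $\eta_1(x,y)=\dd\om_1(x,y)+\om_2(x)\om_3(y)-\om_3(x)\om_2(y)$, and solve this for $\dd\om_1$. Rearranging reproduces exactly the displayed expression for $\dd\om_1$, and together with \eqref{d-om} it exhibits the full system $\eta_2=\eta_3=0$, $\eta_1=-\frac{\tau}{4n^2}g(J_1\cdot,\cdot)$ of structure equations satisfied by the defining 1-forms.

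There is no real obstacle, since every ingredient is already secured; the one step deserving care is the inversion of \eqref{rho-eta}, which relies on $J_1^2=-I$ to trade $\eta_1(J_1\cdot,\cdot)$ for $\eta_1$. I would also re-verify the numerical coefficient $\tau/(4n^2)$ by carefully tracking the factor $4n$ through the trace of the Einstein identity, as that is the most likely place for a stray factor or sign to slip in.
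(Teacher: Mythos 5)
Your proposal is correct and follows essentially the same route as the paper: the paper (implicitly) obtains the formula by substituting the Einstein condition $\rho=\frac{\tau}{4n}g$ from \thmref{thm-Ein} into the relation of \lemref{l-rho-eta}, which identifies $\eta_1(x,y)=-\frac{\tau}{4n^2}g(J_1x,y)$ (this is the content of \eqref{RJJ23t} compared with \eqref{RJJ23}), and then unfolds the definition \eqref{eta} for $(\al,\bt,\gm)=(3,1,2)$ exactly as you do. Your inversion of \eqref{rho-eta} via $J_1^2=-I$ and the trace computation $\lm=\tau/(4n)$ are both accurate, so there is no gap.
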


\section{Quaternionic K\"ahler NH-manifolds in a classification of almost hypercomplex
NH-manifolds}

Firstly, let us consider the case when $H$ is (integrable)
hypercomplex structure, \ie when $N_\al$ vanishes for each
$\al=1,2,3$.

 The Nijenhuis tensor and its associated tensor for
each $J_\al$ are determined as follows:
\begin{equation}\label{NJ}
    \begin{array}{l}
      N_\al(x,y)=\left(\n_x J_\al\right)J_\al y-\left(\n_y
J_\al\right)J_\al x+\left(\n_{J_\al x}
J_\al\right)y-\left(\n_{J_\al y} J_\al\right)x, \\[4pt]
      N_\al^*(x,y)=\left(\n_x J_\al\right)J_\al y+\left(\n_y
J_\al\right)J_\al x+\left(\n_{J_\al x}
J_\al\right)y+\left(\n_{J_\al y} J_\al\right)x. \\
    \end{array}
\end{equation}
Therefore, according to \eqref{qK}, for the quaternionic K\"ahler
 manifolds we have
\begin{equation}\label{NN*}
\begin{array}{l}
N_\al(x,y)=-\left[\om_\gm(x)+\om_\bt(J_\al x)\right]J_\gm y
-\left[\om_\bt(x)-\om_\gm(J_\al x)\right]J_\bt y\\[4pt]
\phantom{N_\al(x,y)=} +\left[\om_\gm(y)+\om_\bt(J_\al
y)\right]J_\gm x +\left[\om_\bt(y)-\om_\gm(J_\al y)\right]J_\bt x,
\\[4pt]
N_\al^*(x,y)=-\left[\om_\gm(x)+\om_\bt(J_\al x)\right]J_\gm y
-\left[\om_\bt(x)-\om_\gm(J_\al x)\right]J_\bt y\\[4pt]
\phantom{N_\al^*(x,y)=} -\left[\om_\gm(y)+\om_\bt(J_\al
y)\right]J_\gm x -\left[\om_\bt(y)-\om_\gm(J_\al y)\right]J_\bt x.
\end{array}
\end{equation}
The last equations imply immediately the next two lemmas.
\begin{lem}\label{lm-N}
The tensors $N_\al$ and $N_\al^*$ vanish iff
$\om_\gm=-\om_\bt\circ J_\al$ for any fixed cyclic permutation
$(\al, \bt, \gm)$ of $(1,2,3)$.\hfill $\Box$
\end{lem}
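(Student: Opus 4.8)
The plan is to derive both $N_\al$ and $N_\al^*$ from the explicit formulas in \eqref{NN*} and read off when they vanish simultaneously. First I would observe that the two expressions share the same ``$x$-terms'' (the first line of each) but differ in the sign of the ``$y$-terms'' (the second line). Writing the coefficient functions as $a_\gm(x)=\om_\gm(x)+\om_\bt(J_\al x)$ and $a_\bt(x)=\om_\bt(x)-\om_\gm(J_\al x)$, equations \eqref{NN*} become
\begin{equation*}
\begin{array}{l}
N_\al(x,y)=-a_\gm(x)J_\gm y-a_\bt(x)J_\bt y+a_\gm(y)J_\gm x+a_\bt(y)J_\bt x,\\[4pt]
N_\al^*(x,y)=-a_\gm(x)J_\gm y-a_\bt(x)J_\bt y-a_\gm(y)J_\gm x-a_\bt(y)J_\bt x.
\end{array}
\end{equation*}
The key algebraic step is then elementary: adding the two tensors kills the $y$-terms and leaves $N_\al(x,y)+N_\al^*(x,y)=-2a_\gm(x)J_\gm y-2a_\bt(x)J_\bt y$, while subtracting them kills the $x$-terms.

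Next I would argue the equivalence in both directions. For the easy direction, if $a_\gm\equiv a_\bt\equiv 0$, then every coefficient in \eqref{NN*} vanishes and hence $N_\al=N_\al^*=0$ trivially. For the converse, suppose $N_\al=N_\al^*=0$. From $N_\al+N_\al^*=0$ we get $a_\gm(x)J_\gm y+a_\bt(x)J_\bt y=0$ for all $x,y$. Since $J_\bt y$ and $J_\gm y$ are (for $y\neq 0$) linearly independent at each point — they lie in the images of two anticommuting almost complex structures, so $J_\bt y=\pm J_\gm(J_\al y)$ forces independence by \eqref{J123} — the coefficients $a_\gm(x)$ and $a_\bt(x)$ must both vanish for every $x$. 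Thus $a_\gm\equiv a_\bt\equiv 0$, which is exactly
\[
\om_\gm(x)=-\om_\bt(J_\al x),\qquad \om_\bt(x)=\om_\gm(J_\al x).
\]
Finally I would note that these two scalar equations are not independent: replacing $x$ by $J_\al x$ in the first and using $J_\al^2=-I$ from \eqref{J123} turns it into the second, so either one is equivalent to the single stated condition $\om_\gm=-\om_\bt\circ J_\al$.

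The main (and only) obstacle is the linear-independence argument used to pass from the vanishing of $a_\gm(x)J_\gm y+a_\bt(x)J_\bt y$ to the vanishing of the coefficients. One must be slightly careful because the metric is indefinite, so ``independence'' should be justified purely algebraically from the quaternionic relations \eqref{J123} rather than from any positivity; concretely, applying $J_\al$ to the identity $a_\gm(x)J_\gm y+a_\bt(x)J_\bt y=0$ and using $J_\al J_\gm=\pm J_\bt$, $J_\al J_\bt=\mp J_\gm$ produces a second relation, and the two together force $a_\gm(x)=a_\bt(x)=0$. Everything else is direct substitution and the comment that the permutation $(\al,\bt,\gm)$ may be kept fixed, since $N_\al$ and $N_\al^*$ each involve only that one index.
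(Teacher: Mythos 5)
Your proof is correct and follows essentially the same route as the paper, which simply declares the lemma an immediate consequence of the explicit formulas \eqref{NN*}; you have merely supplied the details the paper omits, namely the pointwise linear independence of $J_\bt y$ and $J_\gm y$ (forced by \eqref{J123}) and the observation that the two coefficient conditions $\om_\gm=-\om_\bt\circ J_\al$ and $\om_\bt=\om_\gm\circ J_\al$ are equivalent under $x\mapsto J_\al x$. Both fill-ins are sound, so nothing is missing.
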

\begin{lem}\label{lm-NN}
The tensors $N_\al$ and $N_\al^*$ ($\al=1,2,3$) vanish iff
\begin{equation}\label{qK-usl}
\om_\al=\om_\bt\circ J_\gm=-\om_\gm\circ J_\bt
\end{equation}
for  cyclic permutations $(\al, \bt, \gm)$ of $(1,2,3)$.\hfill
$\Box$
\end{lem}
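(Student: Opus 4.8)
The plan is to obtain \lemref{lm-NN} by applying \lemref{lm-N} three times, once for each cyclic permutation, and then rewriting the resulting conditions in the compact form \eqref{qK-usl}. By \lemref{lm-N}, for a single fixed cyclic permutation $(\al,\bt,\gm)$ the pair $N_\al$, $N_\al^*$ vanishes exactly when $\om_\gm=-\om_\bt\circ J_\al$. Since the hypothesis here is the simultaneous vanishing of all six tensors $N_\al$, $N_\al^*$ $(\al=1,2,3)$, I would run \lemref{lm-N} on the three permutations $(1,2,3)$, $(2,3,1)$ and $(3,1,2)$ and conclude that this simultaneous vanishing is equivalent to the three identities
\begin{equation*}
\om_3=-\om_2\circ J_1,\qquad \om_1=-\om_3\circ J_2,\qquad \om_2=-\om_1\circ J_3
\end{equation*}
holding together.

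It then remains only to identify these three identities with \eqref{qK-usl}. As a set they already coincide with the three relations $\om_\al=-\om_\gm\circ J_\bt$ contained in \eqref{qK-usl}, so that half of the equivalence is immediate. To recover the remaining three relations $\om_\al=\om_\bt\circ J_\gm$ I would compose each identity with the appropriate structure tensor on the right and use $J_\al^2=-I$ from \eqref{J123}: for example, composing $\om_2=-\om_1\circ J_3$ with $J_3$ gives $\om_2\circ J_3=-\om_1\circ J_3^2=\om_1$, \ie $\om_1=\om_2\circ J_3$, and the two analogous manipulations yield $\om_2=\om_3\circ J_1$ and $\om_3=\om_1\circ J_2$. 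The six resulting scalar equalities are precisely those packaged in \eqref{qK-usl}, which closes the chain of equivalences and proves the lemma.

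Since this is pure index bookkeeping, I expect no real obstacle; the only point deserving a check is the internal consistency of the three half-conditions. Concretely, substituting $\om_3=-\om_2\circ J_1$ into $\om_1=-\om_3\circ J_2$ and simplifying with the product relation $J_1\circ J_2=J_3$ from \eqref{J123} should reproduce $\om_1=\om_2\circ J_3$ rather than a contradiction; verifying one such substitution confirms that the three conditions are mutually compatible and that no information is lost in passing to the symmetric form \eqref{qK-usl}.
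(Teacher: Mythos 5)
Your proposal is correct and follows essentially the paper's own route: the paper states that equations \eqref{NN*} ``imply immediately'' both Lemma~\ref{lm-N} and Lemma~\ref{lm-NN}, and your derivation---applying Lemma~\ref{lm-N} to the three cyclic permutations to get $\om_3=-\om_2\circ J_1$, $\om_1=-\om_3\circ J_2$, $\om_2=-\om_1\circ J_3$, then recovering the remaining relations of \eqref{qK-usl} by composing with the appropriate $J_\al$ and using $J_\al^2=-I$ and $J_\al=J_\bt\circ J_\gm$---is exactly the intended bookkeeping. The consistency check at the end is a nice touch but not logically necessary, since the equivalence is already closed by the composition argument.
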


Now, according to \eqref{qK} and \eqref{gJ}, the structural
tensors and their corresponding Lie 1-forms of the derived
quaternionic K\"ahler NH-manifold, defined by \eqref{F}
 and \eqref{theta-al},  have the form
\begin{gather}
    F_\al(x,y,z)=\om_\gm(x)g(J_\bt
y,z)-\om_\bt(x)g(J_\gm y,z),\label{qK-F}\\[4pt]
    \ta_\al(z)=-\eb\om_\gm(J_\bt z)+\eg\om_\bt(J_\gm z).\label{qK-theta}
\end{gather}

\begin{prop}\label{prop-int}
If a quaternionic K\"ahler NH-manifold $(M,H,G)$ is integrable,
then it is a hyper-K\"ahler NH-manifold, \ie
\[
(M,H,G)\in\left(\W_3\oplus\W_4\right)(J_1)\cap\left(\W_1\oplus\W_2\right)(J_2)
\cap\left(\W_1\oplus\W_2\right)(J_3)
\]
\[
 \Rightarrow \quad
(M,H,G)\in\K.
\]
\end{prop}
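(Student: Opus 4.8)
The plan is to show that integrability forces the defining $1$-forms $\om_\al$ to vanish, so that \eqref{qK} gives $\n J_\al=0$. First I would translate the hypothesis ``$(M,H,G)$ is integrable'' into conditions on the $\om_\al$. Since integrability means all three Nijenhuis tensors $N_\al$ vanish (equivalently, two of them by the integrability remark in \S1), I would invoke \lemref{lm-NN}: the vanishing of $N_\al$ and $N_\al^*$ for all $\al=1,2,3$ is equivalent to the system \eqref{qK-usl}, namely $\om_\al=\om_\bt\circ J_\gm=-\om_\gm\circ J_\bt$ for all cyclic $(\al,\bt,\gm)$. This is the natural starting point, as it packages the integrability assumption into three clean relations among the $\om_\al$.

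Next I would exploit the fact that these relations over-determine the $\om_\al$. Applying \eqref{qK-usl} twice and using $J_\al^2=-I$ and the quaternionic multiplication rules \eqref{J123}, one composes the identities along the cycle: for instance $\om_1=\om_2\circ J_3$ and $\om_2=\om_3\circ J_1$ give $\om_1=\om_3\circ J_1\circ J_3$, while the relation $\om_1=-\om_3\circ J_2$ provides a competing expression; substituting $J_1 J_3 = \pm J_2$ from \eqref{J123} should yield a sign contradiction unless the forms are zero. Concretely, I expect that chasing the three relations around the cycle produces an equation of the form $\om_\al=-\om_\al$, forcing $\om_\al=0$ for each $\al$. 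This is the key computational step, and the main obstacle will be tracking the signs and the noncommutativity of the $J_\al$ carefully so that the relations genuinely collapse rather than merely rearranging.

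Once $\om_1=\om_2=\om_3=0$ is established, the conclusion is immediate: substituting into \eqref{qK} gives $\left(\n_x J_\al\right)y=0$ for all $x,y$ and all $\al$, so $\n J_\al=0$ for each $\al=1,2,3$, which is precisely the defining condition for the class $\K$ of hyper-K\"ahler NH-manifolds. Equivalently, all structural tensors vanish by \eqref{qK-F}, $F_\al=0$, placing the manifold in $\W_0(J_\al)\subset\K$. This also settles the class memberships asserted in the statement, since $\W_0$ lies in every class of each classification \eqref{cl-H}, \eqref{cl-N}; in particular it lies in the listed intersection.

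As an alternative route to the vanishing of the forms, I could avoid the sign-chase by counting: the system \eqref{qK-usl} expresses each $\om_\al$ as an isometric image (up to sign) of another under the $J$'s, so $\|\om_1\|^2=\|\om_2\|^2=\|\om_3\|^2$ in the appropriate pointwise sense, and combined with the cyclic incompatibility one again concludes the forms are null and then zero. I expect the direct substitution argument to be cleaner, so I would present that as the main line and mention the incompatibility of the three relations as the crux. Either way, the heart of the matter is that hypercomplex integrability is far stronger than quaternionic K\"ahler-ness in this NH-setting, collapsing the connection forms entirely and forcing the parallel (hence, by \thmref{thm-R=0}, flat) structure.
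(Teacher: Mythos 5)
Your opening step agrees with the paper's: integrability gives $N_1=N_2=N_3=0$, and via \lemref{lm-NN} one gets the relations \eqref{qK-usl} (both you and the paper pass from $N_\al=0$ to $N_\al=N_\al^*=0$; in the quaternionic K\"ahler setting this can be justified from \eqref{NN*} by a rank argument, so it is not the issue). The fatal problem is your key step: the claim that chasing \eqref{qK-usl} around the cycle produces $\om_\al=-\om_\al$ and hence $\om_\al=0$. This is false. The system \eqref{qK-usl} is exactly consistent, not over-determined: take $\om_1$ arbitrary and set $\om_2=-\om_1\circ J_3$, $\om_3=\om_1\circ J_2$; then all six relations in \eqref{qK-usl} hold, because the quaternionic identities \eqref{J123} make every loop close with sign $+1$. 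For instance, composing $\om_1=\om_2\circ J_3$, $\om_2=\om_3\circ J_1$, $\om_3=\om_1\circ J_2$ gives $\om_1(x)=\om_1(J_2J_1J_3x)$, and $J_2J_1J_3=-J_3J_3=I$, a tautology rather than a contradiction. The same defect undermines your alternative ``norm counting'' route, which again rests on the alleged cyclic incompatibility. There is also a conceptual reason your plan cannot work: \eqref{qK-usl} involves only $H$ and the $\om_\al$, never the metric, whereas the conclusion $\n J_\al=0$ concerns the Levi-Civita connection of $g$; an argument that never uses $G$ cannot prove the proposition.

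What is missing is precisely the NH-metric structure. One correct way to finish: by \eqref{gJJ}, $J_1$ is skew-adjoint with respect to $g$, so $F_1(x,y,z)$ is antisymmetric in $(y,z)$; but \eqref{qK-F} gives $F_1(x,y,z)=\om_3(x)g_2(y,z)-\om_2(x)g_3(y,z)$ with $g_2,g_3$ symmetric, hence $F_1=0$, and since $g_2$ and $g_3$ are not proportional (otherwise $J_1=J_2J_3=\mp I$), this forces $\om_2=\om_3=0$ --- note this uses no integrability at all. Integrability enters only at the end, through \eqref{qK-usl}: $\om_1=\om_2\circ J_3=0$, and then \eqref{qK} gives $\n J_\al=0$, i.e.\ $(M,H,G)\in\K$. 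The paper's own proof encodes the metric differently: it evaluates the Lie forms via \eqref{qK-theta} under \eqref{qK-usl} (the signs $\ea$ carry the metric information), obtaining $\ta_1=-2\om_1$, $\ta_2=\ta_3=0$, and combines this with the relation $\left(\n_x J_\al\right)y=\left(\n_{J_\al x} J_\al\right)J_\al y$ coming from $N_\al=N_\al^*=0$ to conclude membership in $\K$. Either way, the compatibilities \eqref{gJJ}--\eqref{gJ} are indispensable, and your proposal never invokes them.
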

\begin{proof}
Let $(M,H,G)$ be an integrable hypercomplex NH-manifold, \ie
$(M,H,G)$ belongs to the class $\W_3\oplus\W_4$ with respect to
$J_1$ in \eqref{cl-H} and $(M,H,G)$ is an element of
 $\W_1\oplus\W_2$ regarding $J_2$ and $J_3$, according to
\eqref{cl-N}.

Therefore $N_1=N_2=N_3=0$ hold and then, according to
\lemref{lm-NN}, conditions \eqref{qK-usl} are valid. Hence,
according to $\ea+\eb+\eg=-1$ and $\ea\eb\eg=1$, relation
\eqref{qK-theta} takes the form
\[
\ta_\al=-(1+\ea)\om_\al,
\]
which imply
\[
\ta_1=-2\om_1,\qquad \ta_2=\ta_3=0.
\]

On the other hand, $N_\al=N_\al^*=0$  and \eqref{NJ}  imply
$\left(\n_x J_\al\right)y=\left(\n_{J_\al x} J_\al\right)J_\al y$
and finally the fact that the manifold is hyper-K\"ahlerian with
an NH-metric structure.
\end{proof}
\begin{prop}\label{prop-23}
If an almost hypercomplex NH-manifold $(M,H,G)$, determined by the
properties $\ta_2=\ta_3=0$, is quaternionic K\"ahlerian, then it
is a hyper-K\"ahler NH-manifold, \ie
\[
(M,H,G)\in%
\left(\W_2\oplus\W_3\right)(J_2)
\cap\left(\W_2\oplus\W_3\right)(J_3)\quad \Rightarrow \quad
(M,H,G)\in\K.
\]
\end{prop}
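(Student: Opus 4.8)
The plan is to show that the two conditions $\ta_2=\ta_3=0$ already force the local $1$-forms to satisfy the \emph{whole} system \eqref{qK-usl}, so that $H$ becomes integrable, after which I can simply quote \propref{prop-int}.

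First I would make $\ta_2$ and $\ta_3$ explicit. Inserting $\varepsilon_1=1$, $\varepsilon_2=\varepsilon_3=-1$ into \eqref{qK-theta} gives
\[
\ta_2(z)=\om_1(J_3 z)+\om_3(J_1 z),\qquad
\ta_3(z)=-\om_1(J_2 z)-\om_2(J_1 z).
\]
Setting each expression to zero and then replacing $z$ by $J_1 z$, while using the products $J_3 J_1=J_2$ and $J_2 J_1=-J_3$ from \eqref{J123} together with $J_\al^2=-I$, I expect $\ta_2=0$ to reduce to $\om_3=\om_1\circ J_2$ and $\ta_3=0$ to reduce to $\om_1=\om_2\circ J_3$. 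These are precisely the $\al=3$ and the $\al=1$ instances of the first equality in \eqref{qK-usl}.

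The next step is to recover the missing ($\al=2$) instance purely algebraically. Substituting $\om_1=\om_2\circ J_3$ into $\om_3=\om_1\circ J_2$ and simplifying $J_3 J_2=-J_1$ gives $\om_3=-\om_2\circ J_1$, and composing once more with $J_1$ returns $\om_2=\om_3\circ J_1$. The three companion equalities $\om_\al=-\om_\gm\circ J_\bt$ of \eqref{qK-usl} then follow by composing each of the three relations just obtained with the appropriate $J_\al$ and using $J_\al^2=-I$, so the entire system \eqref{qK-usl} is in force. By \lemref{lm-NN} this is equivalent to $N_\al=N_\al^*=0$ for every $\al$, i.e. $H$ is an integrable hypercomplex structure and $(M,H,G)$ lies in $(\W_3\oplus\W_4)(J_1)\cap(\W_1\oplus\W_2)(J_2)\cap(\W_1\oplus\W_2)(J_3)$.

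Having reached integrability, I would invoke \propref{prop-int}, which asserts that an integrable quaternionic K\"ahler NH-manifold belongs to $\K$; this finishes the proof. The delicate part throughout is the sign and index bookkeeping — tracking the $\varepsilon_\al$ and the various products $J_\al J_\bt$ — and, above all, checking that the identities extracted from $\ta_2=0$ and from $\ta_3=0$ are two \emph{distinct} members of the cyclic triple in \eqref{qK-usl}, so that combining them genuinely produces the third rather than merely reproducing one of the two.
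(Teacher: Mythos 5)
Your proposal is correct and follows essentially the same route as the paper: both proofs use \eqref{qK-theta} to turn $\ta_2=\ta_3=0$ into the vanishing of the Nijenhuis tensors and then conclude via \propref{prop-int}. The only cosmetic difference is that you derive the third relation of \eqref{qK-usl} algebraically from the other two, whereas the paper gets $N_2=N_3=0$ directly from \eqref{qK-theta} and \eqref{NN*} and then quotes the known fact that vanishing of two of the $N_\al$ forces the third to vanish.
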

\begin{proof}
Since $\ta_2=\ta_3=0$, we have $N_2=N_3=0$, because of
\eqref{qK-theta} and \eqref{NN*}. Consequently, $N_1$ vanishes,
too. Then, according to \propref{prop-int} and conditions
\eqref{cl-N}, the considered manifold belongs to the class $\K$.
\end{proof}

Let us remark, using \eqref{cl-N}, that an almost complex manifold
with Norden metric belongs to $\W_1\oplus\W_3$ regarding $J_\al$
iff the following property holds for $\al=2$ or $3$, respectively
\begin{equation}\label{W1W3}
\mathop{\s}\limits_{x,y,z}F_\al(x,y,z)=\frac{1}{2n}\mathop{\s}\limits_{x,y,z}\left\{
g(x,y)\ta_\al(z)+g(J_\al x,y)\ta_\al(J_\al z)\right\}.
\end{equation}

\begin{prop}\label{prop-13}
Let $(M,H,G)$ be an almost hypercomplex NH-manifold belonging to
the class $\W_1\oplus\W_3$ with respect to $J_2$ and $J_3$. If
$(M,H,G)$  is quaternionic K\"ahlerian, then it is a K\"ahler
manifold with
respect to $J_1$, \ie %
\[
(M,H,G)\in\left(\W_1\oplus\W_3\right)(J_2)\cap\left(\W_1\oplus\W_3\right)(J_3)\quad
\Rightarrow\quad (M,H,G)\in\W_0(J_1).
\]
Moreover, we have
\begin{equation}\label{F123}
    \begin{array}{ll}
      \left(\n_xJ_1\right)y=0, \\[4pt]
      \left(\n_xJ_2\right)y=\om_1(x)J_3 y, \qquad & \om_1(x)=-\ta_2(J_3 x),\\[4pt]
      \left(\n_xJ_3\right)y=-\om_1(x)J_2 y, \qquad & \om_1(x)=\ta_3(J_2 x).\\[4pt]
    \end{array}
\end{equation}
\end{prop}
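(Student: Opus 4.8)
The plan is to play the quaternionic K\"ahler equations \eqref{qK} against the class hypothesis. First I would specialize \eqref{qK} to each index and record from \eqref{qK-F} the structural tensors $F_2(x,y,z)=\om_1(x)g(J_3y,z)-\om_3(x)g(J_1y,z)$ and $F_3(x,y,z)=\om_2(x)g(J_1y,z)-\om_1(x)g(J_2y,z)$, together with the Lie forms $\ta_2,\ta_3$ from \eqref{qK-theta}. The hypothesis $(M,H,G)\in(\W_1\oplus\W_3)(J_2)\cap(\W_1\oplus\W_3)(J_3)$ means, via \eqref{W1W3}, that the cyclic sums $\mathop{\s}_{x,y,z}F_2$ and $\mathop{\s}_{x,y,z}F_3$ are prescribed in terms of $\ta_2,\ta_3$ and the metric tensors.

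The decisive observation is a symmetry mismatch. By definition \eqref{F} the tensor $F_\al$ equals $\bigl(\n g_\al\bigr)$, hence it is symmetric in its last two arguments for $\al=2,3$, since $g_2$ and $g_3$ are symmetric Norden metrics. On the other hand, in the expressions above the terms $\om_3(x)g(J_1y,z)$ and $\om_2(x)g(J_1y,z)$ are skew-symmetric in $(y,z)$, because $g_1(y,z)=g(J_1y,z)$ is the K\"ahler $2$-form (antisymmetric), while $g(J_2y,z)$ and $g(J_3y,z)$ are symmetric. Therefore these skew parts must vanish, which forces $\om_2=\om_3=0$. The same conclusion can be extracted purely from \eqref{W1W3} by substituting $F_2,F_3,\ta_2,\ta_3$ and comparing the antisymmetric components of the two sides; the symmetry/antisymmetry split is precisely what makes that comparison collapse.

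Once $\om_2=\om_3=0$, everything follows by substitution into \eqref{qK}. For $\al=1$ we get $\bigl(\n_xJ_1\bigr)y=\om_3(x)J_2y-\om_2(x)J_3y=0$, that is $(M,H,G)\in\W_0(J_1)$; for $\al=2,3$ we obtain $\bigl(\n_xJ_2\bigr)y=\om_1(x)J_3y$ and $\bigl(\n_xJ_3\bigr)y=-\om_1(x)J_2y$, the first lines of \eqref{F123}. Finally, feeding $\om_2=\om_3=0$ into \eqref{qK-theta} gives $\ta_2=\om_1\circ J_3$ and $\ta_3=-\om_1\circ J_2$; inverting these with $J_2^2=J_3^2=-I$ yields $\om_1(x)=-\ta_2(J_3x)=\ta_3(J_2x)$, completing \eqref{F123}.

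The step I expect to be the main obstacle is isolating $\om_2$ and $\om_3$. If one proceeds through \eqref{W1W3} directly, the cyclic sums entangle all three $\om_\al$ and the bookkeeping is heavy. Recognizing that $F_2$ and $F_3$ must be symmetric in their last two slots---so that only the $g_1$-terms can carry antisymmetry---is what streamlines the argument and pins down $\om_2=\om_3=0$ cleanly, after which the remaining assertions are immediate substitutions.
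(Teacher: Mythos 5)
Your proof is correct, but it follows a genuinely different route from the paper's, and in fact proves more. The paper uses the class hypothesis in an essential way: it substitutes the quaternionic K\"ahler expressions \eqref{qK-F}, \eqref{qK-theta} into the $\W_1\oplus\W_3$-characterization \eqref{W1W3} for $\al=2,3$ to obtain $\ta_2=\om_1\circ J_3$ and $\ta_3=-\om_1\circ J_2$, and then compares these with the general identities $\ta_2=\om_1\circ J_3+\om_3\circ J_1$, $\ta_3=-\om_1\circ J_2-\om_2\circ J_1$ coming from \eqref{qK-theta}; this comparison forces $\om_2=\om_3=0$, after which \eqref{F123} follows by substitution into \eqref{qK}. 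Your symmetry-mismatch argument replaces all of this with the observation that $F_1$ is antisymmetric while $F_2,F_3$ are symmetric in their last two arguments (immediate from \eqref{F} and \eqref{gJ}, since $g_1$ is a $2$-form while $g_2,g_3$ are symmetric metrics, and $\n g=0$), whereas the right-hand sides of \eqref{qK-F} have mixed symmetry; nondegeneracy of $g_1$ then kills $\om_2$ and $\om_3$, and the rest is the same substitution. Each step checks out: the skew part of $F_2$ in $(y,z)$ is $-\om_3(x)g_1(y,z)$ and that of $F_3$ is $\om_2(x)g_1(y,z)$, so both $1$-forms vanish, and the inversions $\om_1=-\ta_2\circ J_3=\ta_3\circ J_2$ follow from $J_2^2=J_3^2=-I$. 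The notable consequence, which you should state explicitly, is that your argument never invokes the $\W_1\oplus\W_3$ hypothesis: it shows that \emph{every} quaternionic K\"ahler NH-manifold satisfies $\om_2=\om_3=0$ and hence \eqref{F123}. So you have proved a strictly stronger statement, which renders the hypothesis of the proposition redundant and collapses the dichotomy in the paper's Conclusion (and abstract) between the general class $\W_1\oplus\W_2\oplus\W_3$ and the class $\W_1\oplus\W_3$ --- on your argument these coincide for quaternionic K\"ahler NH-manifolds. The paper's route has the virtue of staying within its classification machinery and only using identities it has already displayed; yours is more elementary, shorter, and exposes a structural fact about the interplay between the quaternionic K\"ahler condition and the NH-metric that the paper's organization obscures.
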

\begin{proof}
From \eqref{W1W3} for $\al=2$ and $3$ we obtain
\[
\ta_2=\om_1\circ J_3,\qquad \ta_3=-\om_1\circ J_2.
\]
Then, according to \eqref{qK-theta}, we get
\begin{equation}\label{om23}
\om_2=\om_3=0
\end{equation}
and therefore we obtain \eqref{F123}.
\end{proof}

From \propref{prop-23} and \propref{prop-13} we have directly
\begin{cor}\label{cor-W3}
Let $(M,H,G)$ be an almost hypercomplex NH-manifold, belonging to
the class $\W_3$ with respect to $J_2$ and $J_3$. If $(M,H,G)$ is
quaternionic K\"ahlerian, then it is a hyper-K\"ahler NH-manifold,
\ie
\[
(M,H,G)\in\W_3(J_2)\cap\W_3(J_3)\quad \Rightarrow\quad
(M,H,G)\in\K.
\]\hfill $\Box$
\end{cor}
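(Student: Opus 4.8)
The plan is to exploit the fact that, in the orthogonal Ganchev--Borisov decomposition, the class $\W_3(J_\al)$ sits inside \emph{both} $(\W_1\oplus\W_3)(J_\al)$ and $(\W_2\oplus\W_3)(J_\al)$; indeed $\W_3=(\W_1\oplus\W_3)\cap(\W_2\oplus\W_3)$. Hence a manifold in $\W_3(J_2)\cap\W_3(J_3)$ simultaneously meets the hypotheses of both \propref{prop-13} and \propref{prop-23}, and the conclusion should fall out by feeding the hypothesis into each. The essential extra input is that membership in $\W_3(J_\al)$ forces the corresponding Lie $1$-form to vanish, $\ta_\al=0$, since $\W_3$ is one of the trace-free components of the classification \eqref{cl-N}.

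First I would apply \propref{prop-13}, which is legitimate because $\W_3(J_\al)\subset(\W_1\oplus\W_3)(J_\al)$ for $\al=2,3$. This yields $\om_2=\om_3=0$ as in \eqref{om23}, the K\"ahler property $\left(\n_x J_1\right)y=0$, and the reduced structural tensors \eqref{F123}, so that only the single $1$-form $\om_1$ survives. Next I would invoke the remaining part of the $\W_3$ hypothesis, namely $\ta_2=0$: substituting $\om_3=0$ into \eqref{qK-theta} for $\al=2$ collapses it to $\ta_2=\om_1\circ J_3$, whence $\ta_2=0$ and the invertibility of $J_3$ give $\om_1=0$. With $\om_1=\om_2=\om_3=0$, formula \eqref{qK} shows $\n J_\al=0$ for every $\al$, i.e.\ $(M,H,G)\in\K$. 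Equivalently, since $\W_3(J_\al)$ already guarantees $\ta_2=\ta_3=0$, one may apply \propref{prop-23} directly through the inclusion $\W_3(J_\al)\subset(\W_2\oplus\W_3)(J_\al)$ to obtain $(M,H,G)\in\K$ in one stroke; the two routes converge.

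The only point requiring care is the verification that the defining cyclic condition of $\W_3(J_\al)$ really entails the vanishing of $\ta_\al$, together with the sign and cyclic-permutation bookkeeping in \eqref{qK-theta} and \eqref{F123} when specializing to $\al=2,3$. Once those are tracked correctly, the corollary follows directly from \propref{prop-23} and \propref{prop-13} as claimed.
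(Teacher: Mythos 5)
Your proof is correct and takes essentially the same route as the paper, whose entire argument is the single line ``From \propref{prop-23} and \propref{prop-13} we have directly'' --- precisely the two inclusions $\W_3\subset\left(\W_2\oplus\W_3\right)$ and $\W_3\subset\left(\W_1\oplus\W_3\right)$ that you exploit. Your supporting observation that membership in $\W_3(J_\al)$ forces $\ta_\al=0$ (so that \propref{prop-23} applies, and so that $\om_1=-\ta_2\circ J_3$ vanishes in the route through \propref{prop-13}) is exactly the reading of \eqref{cl-N} the paper relies on, and both of your routes land on $(M,H,G)\in\K$ as intended.
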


Having in mind Propositions \ref{prop-int}--\ref{prop-13},
\corref{cor-W3} and \thmref{thm-R=0}, we give the following
\begin{concl}
Let a quaternionic K\"ahler NH-manifold $(M,H,G)$ be in some of
the classes $\W_1\oplus\W_2$ (and in particular $\W_0$, $\W_1$ and
$\W_2$), $\W_2\oplus\W_3$ and $\W_3$ with respect to both of the
structures $J_2$ and $J_3$. Then $(M,H,G)$ is a flat
hyper-K\"ahler NH-manifold. The unique class in \eqref{cl-N} with
some condition for $\n J_2$ and $\n J_3$, where $(M,H,G)$ is not
flat hyper-K\"ahlerian, is $\W_1\oplus\W_3$ and their manifolds
are determined by \eqref{F123}. \hfill $\Box$
\end{concl}

\section{Non-hyper-K\"ahler quaternionic K\"ahler NH-manifolds}

In this section we will characterize the manifold satisfying the
conditions of \propref{prop-13}. It is a non-hyper-K\"ahler
quaternionic K\"ahler NH-manifold.

We apply \eqref{om23} to \eqref{nJ-qK} and obtain the square norms
of the non-zero quantities $\n J_2$ and $\n J_3$ in the considered
case as follows:
\[
\nJ{2}=\nJ{3}=-4n\om_1(\Omega_1),
\]
where $\Omega_1$ is the corresponding vector to $\omega_1$ with
respect to $g$.
\begin{cor}
Let $(M,H,G)$ be a quaternionic K\"ahler NH-manifold, determined
by a local 1-form $\om_1$ in \eqref{F123}. It is an isotropic
hyper-K\"ahler NH-manifold iff the corresponding vector $\Omega_1$
to $\omega_1$ with respect to $g$ is an isotropic vector regarding
$g$.\hfill $\Box$
\end{cor}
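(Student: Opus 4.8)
The statement to prove is a clean equivalence: for the non-hyper-Kähler quaternionic Kähler NH-manifold singled out in \propref{prop-13}, being an isotropic hyper-Kähler NH-manifold is equivalent to $\Omega_1$ (the $g$-dual of $\om_1$) being isotropic. The plan is to reduce everything to the already-computed square norms $\nJ{\al}$ and then apply the definition of isotropic hyper-Kähler directly.

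First I would recall the setting of \propref{prop-13}: the manifold lies in $\W_0(J_1)$, so $\n J_1 = 0$ and hence $\nJ{1}=0$ automatically, while by \eqref{om23} we have $\om_2=\om_3=0$. The key input is the general formula \eqref{nJ-qK} for the square norm of $\n J_\al$ on any quaternionic Kähler NH-manifold. Substituting $\om_2=\om_3=0$ into \eqref{nJ-qK} (as carried out just before the corollary) collapses it to
\[
\nJ{2}=\nJ{3}=-4n\,\om_1(\Omega_1),
\]
and $\nJ{1}=0$. So all the structural information about whether the three square norms vanish is concentrated in the single scalar $\om_1(\Omega_1)$.

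Next I would invoke the definition of an isotropic hyper-Kähler NH-manifold from the first section: the manifold is isotropic hyper-Kähler precisely when $\nJ{\al}=0$ for every $\al=1,2,3$. Since $\nJ{1}=0$ holds identically here, this condition reduces to $\nJ{2}=\nJ{3}=0$, which by the displayed formula is equivalent to $\om_1(\Omega_1)=0$ (the factor $-4n$ is a nonzero constant and can be divided out). Finally, since $\Omega_1$ is by definition the $g$-dual vector of the $1$-form $\om_1$, the scalar $\om_1(\Omega_1)$ equals $g(\Omega_1,\Omega_1)$, so its vanishing is exactly the statement that $\Omega_1$ is an isotropic vector with respect to $g$. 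Chaining these equivalences gives the corollary.

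There is no real obstacle here: the corollary is essentially a specialization of \propref{prop-iqK} to the case $\om_2=\om_3=0$, and the only genuine computation, namely the reduction of \eqref{nJ-qK}, has already been performed in the text immediately preceding the statement. The one point that deserves a line of care is the identification $\om_1(\Omega_1)=g(\Omega_1,\Omega_1)$, which follows from the defining relation of $\Omega_1$ as the metric dual of $\om_1$; once this is noted, the equivalence is immediate and the proof is complete.
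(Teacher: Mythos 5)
Your proposal is correct and follows exactly the paper's own (implicit) argument: the paper derives $\nJ{2}=\nJ{3}=-4n\,\om_1(\Omega_1)$ by substituting \eqref{om23} into \eqref{nJ-qK} in the text immediately preceding the corollary, and the statement then follows from the definition of an isotropic hyper-K\"ahler NH-manifold together with $\nJ{1}=0$ (since $\n J_1=0$ by \eqref{F123}) and the identification $\om_1(\Omega_1)=g(\Omega_1,\Omega_1)$. Nothing is missing; this is the same route, spelled out in slightly more detail than the paper bothers to give.
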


Using \eqref{om23} and \propref{prop-d-om}, for the considered
manifolds here we have
\begin{prop}\label{prop-d-om=}
    Let $\om_1$ be the local 1-form of a quaternionic K\"ahler NH-manifold  of dimension $4n\geq 8$,
    determined by \eqref{F123}. Then $\om_1$ satisfies the condition
\begin{equation}\label{om1==}
\dd\om_1(x,y)=-\frac{\tau}{4n^2}g_1(x,y).
\end{equation} \hfill$\Box$
\end{prop}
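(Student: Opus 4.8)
The plan is to obtain \eqref{om1==} as an immediate specialization of the general formula for $\dd\om_1$ already recorded in \propref{prop-d-om}. The hypothesis supplies two facts. First, the manifold is determined by \eqref{F123}; as the proof of \propref{prop-13} shows, this structure is exactly the one carrying the reduction \eqref{om23}, namely $\om_2=\om_3=0$. Second, the manifold is a quaternionic K\"ahler NH-manifold of dimension $4n\geq 8$, so \propref{prop-d-om} is applicable and gives
\[
\dd\om_1(x,y)=-\om_2(x)\om_3(y)+\om_3(x)\om_2(y)-\frac{\tau}{4n^2}g(J_1x,y).
\]

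First I would substitute $\om_2=\om_3=0$ into this identity. Both quadratic terms $-\om_2(x)\om_3(y)$ and $\om_3(x)\om_2(y)$ then vanish identically, so only the scalar-curvature term survives. Next I would rewrite $g(J_1x,y)$ as $g_1(x,y)$ by means of \eqref{gJ} taken at $\al=1$ (where $\ea=1$). This yields precisely
\[
\dd\om_1(x,y)=-\frac{\tau}{4n^2}g_1(x,y),
\]
which is the asserted identity \eqref{om1==}.

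There is no genuine obstacle in this argument: all of the analytic content has already been absorbed into \propref{prop-d-om}, whose proof in turn relies on \thmref{thm-Ein} (the Einstein property $\rho=\frac{\tau}{4n}g$ valid for $4n\geq 8$) and on the reduction \eqref{om23} that forces the manifold to lie in $\W_1\oplus\W_3$ with respect to $J_2$ and $J_3$. The only point demanding a moment of care is the bookkeeping between $g(J_1\cdot,\cdot)$ and the associated form $g_1$; beyond that, \eqref{om1==} follows by direct substitution.
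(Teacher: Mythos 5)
Your proof is correct and is exactly the paper's own argument: the paper derives \eqref{om1==} by applying \eqref{om23} (i.e.\ $\om_2=\om_3=0$) to the formula of \propref{prop-d-om} and identifying $g(J_1x,y)$ with $g_1(x,y)$ via \eqref{gJ}. Nothing further is needed.
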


According to \eqref{F123} we have $F_1(x,y,z)=\bigl(\n_x
g_\al\bigr) \left( y,z \right)=0$ and then we obtain $\dd
g_1(x,y,z)=\mathop{\s}_{x,y,z}\bigl\{F_1(x,y,z)\bigr\}=0$. Hence
and \eqref{om1==} we establish that $\tau=\const$, \ie $(M,H,G)$
determined by \eqref{F123} has a constant scalar curvature. Then,
having in mind \thmref{thm-Ein}, we get the following
\begin{prop}\label{prop-Ric-sym}
    Quaternionic K\"ahler NH-manifolds
    determined by \eqref{F123} for dimension $4n\geq 8$ are Ricci-symmetric, \ie $\n\rho=0$. \hfill$\Box$
\end{prop}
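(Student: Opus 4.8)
The plan is to show that a quaternionic K\"ahler NH-manifold determined by \eqref{F123}, in dimension $4n\geq 8$, has parallel Ricci tensor. Since \thmref{thm-Ein} already gives that such a manifold is Einstein, i.e. $\rho=\lm g$ for some function $\lm$, the Ricci tensor is parallel precisely when this function $\lm$ is constant. So the entire task reduces to establishing constancy of $\lm$, equivalently constancy of the scalar curvature $\tau$.

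First I would extract the link between $\tau$ and the local $1$-form $\om_1$. By \propref{prop-d-om=}, the manifold satisfies $\dd\om_1(x,y)=-\frac{\tau}{4n^2}g_1(x,y)$. The idea is to differentiate this relation or, more cleanly, to exploit an integrability/closedness condition on the two-form $\dd\om_1$. Concretely, one computes the exterior derivative $\dd(\dd\om_1)=0$ and reads off what this forces on the right-hand side $-\frac{\tau}{4n^2}g_1$. Since $g_1$ is the fundamental K\"ahler form associated to $J_1$, its exterior derivative is controlled by $F_1$. From \eqref{F123} we have $(\n_x J_1)y=0$, hence $F_1=0$, which immediately gives $\dd g_1(x,y,z)=\mathop{\s}_{x,y,z}\bigl\{F_1(x,y,z)\bigr\}=0$, so $g_1$ is closed. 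Combining $\dd(\dd\om_1)=0$ with $\dd g_1=0$ in the identity $\dd\om_1=-\frac{\tau}{4n^2}g_1$ forces $\dd\tau\wedge g_1=0$; because wedging with the nondegenerate two-form $g_1$ is injective on $1$-forms in dimension $4n\geq 8$ (equivalently, $g_1$ has rank $4n$ and its Lefschetz map is injective in the relevant degree), this yields $\dd\tau=0$, i.e. $\tau=\const$.

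The next step is routine once $\tau=\const$ is in hand. With $\rho=\frac{\tau}{4n}g$ (the trace of the Einstein equation fixes the proportionality constant in terms of $\tau$), constancy of $\tau$ gives $\lm=\frac{\tau}{4n}=\const$, and therefore $\n\rho=\frac{\tau}{4n}\n g=0$, since $\n g=0$ for the Levi-Civita connection. This establishes $\n\rho=0$, which is exactly the Ricci-symmetry asserted.

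The main obstacle I anticipate is the step deducing $\dd\tau=0$ from $\dd\tau\wedge g_1=0$: one must make sure the Lefschetz-type injectivity of $\om\mapsto\om\wedge g_1$ genuinely holds in the given degree and dimension. This is where the hypothesis $4n\geq 8$ (i.e. $n\geq 2$) enters, mirroring the role it already played in \thmref{thm-Ein}; in the degenerate low-dimensional case this injectivity can fail. An alternative route that sidesteps the exterior-algebra argument is to differentiate the Einstein relation directly and use the contracted second Bianchi identity $\n^i\rho_{ij}=\tfrac12\n_j\tau$, which for an Einstein manifold of dimension $>2$ forces $\tau$ to be constant; I would mention this as the cleaner high-dimensional justification and fall back on it if the wedge argument turns out to need more care. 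Either way, the essential content is that the Einstein condition plus $4n\geq 8$ upgrades ``Einstein'' to ``Ricci-parallel'', and the computation $F_1=0\Rightarrow\dd g_1=0$ supplies the geometric input specific to the class \eqref{F123}.
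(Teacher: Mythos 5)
Your proposal is correct and follows essentially the same route as the paper: from \eqref{F123} one gets $F_1=0$, hence $\dd g_1=0$, then applying $\dd$ to the identity \eqref{om1==} of \propref{prop-d-om=} forces $\dd\tau\wedge g_1=0$, whence $\tau=\const$, and \thmref{thm-Ein} upgrades this to $\n\rho=0$. One small correction: the hypothesis $4n\geq 8$ is not what rescues the Lefschetz-type injectivity (wedging a $1$-form with a nondegenerate $2$-form is injective already in dimension $4$); that hypothesis enters only because \thmref{thm-Ein} and \propref{prop-d-om=} themselves require $n\geq 2$.
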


As in \propref{prop-R=0} for an arbitrary quaternionic K\"ahler
NH-manifold, in the following proposition we give a necessary and
sufficient condition the considered manifold in this section be
flat.

\begin{prop}\label{prop-om1}
Let $(M,H,G)$ be a quaternionic K\"ahler NH-manifold, determined
by a non-zero local 1-form $\om_1$ in \eqref{F123}. Then $(M,H,G)$
is flat non-hyper-K\"ahlerian iff $\om_1$ is closed.
\end{prop}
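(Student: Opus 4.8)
The plan is to derive the statement as a direct specialization of \propref{prop-R=0} to the subclass of manifolds singled out by \eqref{F123}. The two assertions---non-hyper-K\"ahlerity and the flatness criterion---will be handled separately, and neither should require any extended computation.

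First I would dispose of the non-hyper-K\"ahler half. Since the manifold is determined by \eqref{F123}, we have $\left(\n_x J_2\right)y=\om_1(x)J_3 y$; as $J_3$ is invertible and $\om_1$ is non-zero by hypothesis, this shows $\n J_2\neq 0$, so the manifold cannot lie in $\K$. Hence it is automatically non-hyper-K\"ahlerian, and under the standing hypothesis $\om_1\neq 0$ the compound property ``flat and non-hyper-K\"ahlerian'' reduces to flatness alone.

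Next I would invoke \propref{prop-R=0}: an arbitrary quaternionic K\"ahler NH-manifold is flat if and only if condition \eqref{om1} holds, namely $\dd\om_1(x,y)=-\om_2(x)\om_3(y)+\om_3(x)\om_2(y)$. For the manifolds at hand, \eqref{om23} records that $\om_2=\om_3=0$, so the right-hand side of \eqref{om1} vanishes identically. Thus \eqref{om1} collapses to $\dd\om_1=0$, i.e.\ exactly to the closedness of $\om_1$. Combining this with the previous paragraph yields both directions of the biconditional.

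I do not anticipate a genuine obstacle here: the entire content is that killing $\om_2$ and $\om_3$ via \eqref{om23} annihilates the quadratic terms in the flatness criterion \eqref{om1}, leaving $\dd\om_1=0$. The only point worth stating explicitly is the equivalence ``non-hyper-K\"ahlerian $\Leftrightarrow\om_1\neq0$'', which is immediate from \eqref{F123} since $\n J_1=0$ always while both $\n J_2$ and $\n J_3$ are proportional to $\om_1$.
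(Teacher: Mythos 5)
Your proposal is correct and is essentially the paper's argument in different packaging: the paper specializes identity \eqref{RJJ23} to $\eta_1=\dd\om_1$ (using \eqref{om23}) and applies \thmref{th-0}, which is exactly the content of \propref{prop-R=0} that you invoke as a black box, so the two proofs coincide once that proposition is unfolded. Your explicit observation that non-hyper-K\"ahlerity is automatic from $\om_1\neq0$, since $(\n_xJ_2)y=\om_1(x)J_3y$ cannot vanish identically when $\om_1$ does not, is left implicit in the paper and is a worthwhile clarification.
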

\begin{proof}
Since $M$ is a K\"ahler manifold with respect to $J_1$, then $R$
is K\"ahlerian with respect to $J_1$.

Having in mind \eqref{om23} and \eqref{eta}, we have that
$\eta_1=\dd\om_1$ in the considered case. Then identity
\eqref{RJJ23} takes the following form
\begin{equation*}\label{RR}
\begin{array}{l}
    R(x,y,J_2 z,J_2 w)
    =R(x,y,J_3 z,J_3 w)\\[4pt]
    \phantom{R(x,y,J_2 z,J_2 w)}=-R(x,y,z,w)+d\om_1(x,y)g(J_1
    z,w).
\end{array}
\end{equation*}
It is clear that $R$ is a K\"ahler-type tensor with respect to
$H=(J_\al)$ iff $\om_1$ is closed. Hence, according to
\thmref{th-0}, we obtain the statement.
\end{proof}

\begin{cor}
Let $(M,H,G)$ be a quaternionic K\"ahler NH-manifold, determined
by a non-zero local 1-form $\om_1$ in \eqref{F123}. Then $(M,H,G)$
is flat non-hyper-K\"ahlerian  iff the following identity is valid
\[
d\ta_\al(x,y)+d\ta_\al(J_1 x,J_1 y)-d\ta_\al(J_2 x,J_2
y)-d\ta_\al(J_3 x,J_3 y)=0
\]
for $\al=2$ or $\al=3$.
\end{cor}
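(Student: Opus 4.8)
The plan is to read the statement off \propref{prop-om1}: a quaternionic K\"ahler NH-manifold of the form \eqref{F123} is flat and non-hyper-K\"ahlerian exactly when $\om_1$ is closed, so it suffices to prove that the displayed four-term combination in $\dd\ta_\al$ vanishes if and only if $\dd\om_1=0$. In effect I want to rewrite the closedness of the auxiliary form $\om_1$ purely in terms of the Lie form $\ta_\al$, which is the datum native to the classification of the preceding section.

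I would begin with the dictionary between $\ta_\al$ and $\om_1$. By \eqref{om23} and \eqref{qK-theta} one has $\ta_2=\om_1\circ J_3$ and $\ta_3=-\om_1\circ J_2$, so each $\ta_\al$ $(\al=2,3)$ is just $\om_1$ twisted by a structure that is \emph{not} $\n$-parallel. As $\n$ is torsion-free, $\dd\ta_\al(x,y)=(\n_x\ta_\al)(y)-(\n_y\ta_\al)(x)$; differentiating the twist by the Leibniz rule and feeding in the covariant derivatives of $J_2,J_3$ from \eqref{F123} gives, for $\al=2$, $(\n_x\ta_2)(y)=(\n_x\om_1)(J_3y)-\om_1(x)\om_1(J_2y)$. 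Hence $\dd\ta_2$ is the sum of a part linear in $\n\om_1$, which antisymmetrises to $\dd\om_1$ seen through $J_3$, and a part quadratic in $\om_1$; the case $\al=3$ is analogous, with $J_2$ and $J_3$ interchanged.

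Next I would apply the operator $T(\psi)(x,y)=\psi(x,y)+\psi(J_1x,J_1y)-\psi(J_2x,J_2y)-\psi(J_3x,J_3y)$ to $\psi=\dd\ta_\al$; this is exactly the sign pattern $(+,+,-,-)$ that organises \eqref{RJJ1}--\eqref{RJJ23}. The computation then splits in two: (i) the terms quadratic in $\om_1$ cancel pairwise under $T$, which one checks using only the quaternion relations \eqref{J123}; and (ii) the linear part is reduced, with \eqref{J123}, \eqref{gJJ} and \eqref{gJ}, to an expression in $\dd\om_1$ alone. If that expression is an effective multiple of $\dd\om_1$, then $T(\dd\ta_\al)=0$ is equivalent to $\dd\om_1=0$ and \propref{prop-om1} closes both implications; the symmetry of the set-up under $J_2\leftrightarrow J_3$ is what allows the choice ``$\al=2$ or $\al=3$''.

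The crux is step (ii). After (i) the projector leaves $\dd\om_1(x,J_3y)+\dd\om_1(J_3x,y)+\dd\om_1(J_1x,J_2y)+\dd\om_1(J_2x,J_1y)$, and here I would use that $\dd\om_1=\eta_1$ is $J_1$-invariant by \lemref{l-rho-eta} (via \eqref{rJJ1}) together with $J_2=-J_1J_3$ from \eqref{J123} to rewrite the last two terms. The genuine obstacle is to carry out this collapse so that the surviving pairings recombine into a nonvanishing multiple of $\dd\om_1$ rather than cancel against one another: because the same projector annihilates any multiple of the K\"ahler form $g_1$, one must track the $\ea$-signs carefully and make essential use of the $J_1$-K\"ahler hypothesis in \eqref{F123}, so that the resulting identity is a genuine obstruction and not an automatically satisfied one.
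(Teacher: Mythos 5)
Your opening reduction is the same one the paper intends (its entire proof is that the corollary ``follows directly'' from \propref{prop-om1} and the relations $\om_1=-\ta_2\circ J_3=\ta_3\circ J_2$), and your computations are correct as far as they go: indeed $(\n_x\ta_2)(y)=(\n_x\om_1)(J_3y)-\om_1(x)\om_1(J_2y)$, the terms quadratic in $\om_1$ cancel under your operator $T$, the symmetric part of $\n\om_1$ drops out as well, and what survives is exactly
\[
T(\dd\ta_2)(x,y)=\dd\om_1(x,J_3y)+\dd\om_1(J_3x,y)+\dd\om_1(J_1x,J_2y)+\dd\om_1(J_2x,J_1y).
\]
But the step you yourself flag as the crux fails, and it fails definitively: this expression is not an ``effective multiple'' of $\dd\om_1$ --- it is identically zero. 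In the situation of \eqref{F123} one has $\eta_1=\dd\om_1$ (by \eqref{eta} and \eqref{om23}), and \lemref{l-rho-eta} --- the very $J_1$-invariance \eqref{etaJ1}, \eqref{rJJ1} that you invoke --- gives $\dd\om_1(J_1u,J_1v)=\dd\om_1(u,v)$ on \emph{every} quaternionic K\"ahler NH-manifold of this type, flat or not. Then, since $J_3=J_1J_2$ and $J_1^2=-I$,
\[
\dd\om_1(J_1x,J_2y)=\dd\om_1\bigl(J_1(J_1x),J_1(J_2y)\bigr)=-\dd\om_1(x,J_3y),\qquad
\dd\om_1(J_2x,J_1y)=\dd\om_1\bigl(J_1(J_2x),J_1(J_1y)\bigr)=-\dd\om_1(J_3x,y),
\]
so the four surviving terms cancel in pairs and $T(\dd\ta_\al)\equiv0$.

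Consequently your plan can only ever deliver the trivial implication (flat $\Rightarrow$ identity); the converse is out of reach, because the displayed identity holds on every manifold satisfying \eqref{F123}, whether or not $\om_1$ is closed. This is not a sign-bookkeeping issue that more care would repair: for $4n\geq 8$, \propref{prop-d-om=} gives $\dd\om_1=-\frac{\tau}{4n^2}\,g_1$, which is $J_1$-invariant for any value of $\tau$, so a hypothetical non-flat example ($\tau\neq0$, which the paper nowhere excludes) would satisfy your identity while violating flatness. The degeneracy you noticed --- that the projector annihilates every multiple of the K\"ahler form $g_1$ --- is precisely what happens here, and it cannot be avoided, because $\dd\om_1$ \emph{is} such a multiple. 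In short, the equivalence ``$T(\dd\ta_\al)=0$ iff $\dd\om_1=0$'' on which both your argument and the paper's one-line proof rest is unobtainable; the gap sits exactly at your step (ii), and carrying that step out honestly shows the corollary's criterion to be automatically satisfied rather than a genuine flatness test.
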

\begin{proof}
It follows directly from \propref{prop-om1} and the relations
$\om_1=-\ta_2\circ J_3=\ta_3\circ J_2$ in \eqref{F123}.
\end{proof}


\bigskip

\noindent
\textit{
University of Plovdiv\\
Faculty of Mathematics and Informatics
\\
Department of Geometry\\
236 Bulgaria Blvd\\
Plovdiv 4003\\
Bulgaria}
\\
\texttt{e-mail: mmanev@uni-plovdiv.bg\\
http://fmi.uni-plovdiv.bg/manev}

\end{document}